\documentclass[12pt,reqno]{amsart}

\usepackage[margin=1in]{geometry}

\usepackage{amsmath,amssymb,amsfonts,amsthm}

\usepackage{mathptmx,cite,microtype,enumitem,etoolbox}

\usepackage[colorlinks=true,allcolors=blue]{hyperref}
\hypersetup{pdftitle={Near-unit-root persistence of symmetric stable autoregressive sequences},pdfauthor={J. Ricardo G. Mendonca, Boubaker Smii}}
\theoremstyle{plain}
\newtheorem{theorem}{Theorem}[section]
\newtheorem{proposition}[theorem]{Proposition}
\newtheorem{lemma}[theorem]{Lemma}
\newtheorem{corollary}[theorem]{Corollary}
\newtheorem{conjecture}[theorem]{Conjecture}
\theoremstyle{remark}
\newtheorem{remark}[theorem]{Remark}

\AtEndEnvironment{remark}{\unskip\nobreak\hfill\ensuremath{\bigtriangleup}}

\newcommand{\mku}{\mkern1mu}
\renewcommand{\,}{\ifmmode\mkern2mu\else\thinspace\fi}

\newcommand{\NN}{\mathbb{N}}

\newcommand{\RR}{\mathbb{R}}

\newcommand{\PP}{\mathbb{P}}
\newcommand{\EE}{\mathbb{E}}

\newcommand{\deq}{\mathrel{\mathop:}=} 
\newcommand{\dis}{\mathrel{\stackrel{\mathrm d}{=}}} 

\renewcommand{\leq}{\leqslant}
\renewcommand{\geq}{\geqslant}

\newcommand{\1}{\text{\usefont{U}{bbold}{m}{n}1}}

\newcommand{\dd}{\,\mathrm{d}}
\newcommand{\SaS}{\mathrm{S}\alpha\mathrm{S}}

\newcommand{\rev}[1]{\textcolor{blue}{#1}}
\renewcommand{\rev}[1]{{#1}}

\title[Near-unit-root persistence of symmetric stable AR($1$) sequences]{Near-unit-root persistence of symmetric stable \\ autoregressive sequences}

\author[J. R. G. Mendon\c{c}a]{Jos\'{e} Ricardo G. Mendon\c{c}a$^{\star}$}
\address{Escola de Artes, Ci\^{e}ncias e Humanidades, Universidade de S\~{a}o Paulo, S\~{a}o Paulo, SP, Brazil}
\email{jricardo@usp.br}
\thanks{$^{\star}$Corresponding author. Email: {\href{mailto:jricardo@usp.br}{\nolinkurl{jricardo@usp.br}}}}

\author[B. Smii]{Boubaker Smii}
\address{Department of Mathematics, King Fahd University of Petroleum and Minerals, Dhahran, Saudi Arabia}
\email{boubaker@kfupm.edu.sa}

\date{\today}

\subjclass[2020]{Primary 60G52; Secondary 60G18, 60J05, 60G50}

\keywords{persistence probability, symmetric stable law, autoregressive process, Ornstein--Uhlenbeck process, Lamperti transform, near-unit root}

\begin{document}

\begin{abstract}
Persistence changes character as an autoregressive coefficient approaches one: for each fixed $0 < a < 1$, survival above zero decays exponentially, whereas at the unit root symmetric random-walk survival is of order $n^{-1/2}$. We study this transition for AR($1$) sequences driven by symmetric $\alpha$-stable innovations and write $\Lambda(a,\alpha)$ for their exponential persistence rate. The entire chain admits an exact representation through a single stable L\'{e}vy process observed on a geometrically expanding time grid. Comparison with continuous half-line survival gives $\Lambda(a,\alpha) \leq \frac{\alpha}{2}\log{(1/a)}$. For $0 < \alpha < 2$, this bound disproves the stable specialization of a conjecture of Hinrichs, Kolb and Wachtel for regularly varying innovation tails. \rev{Combining stable closure under subsampling with a monotonicity coupling yields a lower bound of the same near-unit order.} This proves $\Lambda(a,\alpha) \asymp \log{(1/a)}$ as $a \uparrow 1$ and shows that the ratio $\Lambda(a,\alpha)/\log{(1/a)}$ converges to a limit in $(0,\alpha/2]$, equal to its supremum over $0 < a < 1$. Finally, a Lamperti transformation reduces identification of this constant to a dense-sampling persistence problem for a stationary stable Ornstein--Uhlenbeck process. Existing Gaussian theory determines the sharp value at $\alpha=2$. For $0 < \alpha < 2$, identifying the value requires controlling paths that cross below zero and return above zero between consecutive observations.
\end{abstract}

\maketitle


\section{Introduction}
\label{sec:introduction}

\subsection{Model and persistence exponent}

Let $(\xi_{n})_{n \geq 1}$ be independent and identically distributed continuous, symmetric $\alpha$-stable random variables \cite{SamorodnitskyTaqqu1994}, with $0 < \alpha \leq 2$, normalized by
\begin{equation}
\label{eq:stable-cf}
\EE[e^{iq\xi_{1}}] = \exp(-\sigma^{\alpha} |q|^{\alpha}), \quad
q \in \RR, \quad \sigma > 0.
\end{equation}
Thus $\alpha=2$ corresponds to a centered Gaussian variable of variance $2\sigma^{2}$. We consider the AR($1$) recursion 
\begin{equation}
\label{eq:ar1}
X_{n} = a X_{n-1}+\xi_{n}, \quad 0 < a < 1, \quad X_{0} = x > 0,
\end{equation}
and its persistence probability
\begin{equation}
\label{eq:Qn}
Q_{n}(x;a,\alpha,\sigma) \,\deq\, \PP_{x}(X_{1} > 0,\ldots,X_{n} > 0) = \PP_{x}(T_{0} > n),
\end{equation}
where $T_{0} = \inf\{n \geq 1\colon X_{n} \leq 0\}$.

Hinrichs, Kolb and Wachtel \cite{HKW2020} developed the existence theory for the exponential rate in \eqref{eq:Qn}; we state the consequences used below.

\begin{proposition}[{Hinrichs--Kolb--Wachtel \cite{HKW2020}}]
\label{prop:hkw}
Symmetric $\alpha$-stable innovations satisfy the hypotheses of \cite[Theorem~1]{HKW2020}. Specifically, $\EE[\log{(1+|\xi_{1}|)}] < \infty$, $\EE[(\xi_{1}^{+})^{\delta}] < \infty$ for every $0 < \delta < \alpha$, and $\PP(\xi_{1} > 0)\PP(\xi_{1} < 0) > 0$. Then the following holds:
\begin{enumerate}[label=\textup{(\roman*)}]
\item The limit
\begin{equation}
\label{eq:Lambda}
\Lambda(a,\alpha) \deq
-\lim_{n \to \infty}\,\frac{1}{n}\log{Q_{n}(x;a,\alpha,\sigma)}
\end{equation}
exists in $(0,\infty)$ and is the same for every fixed starting point $x > 0$.
\item Writing $\pi$ for the stationary law of \eqref{eq:ar1},
\begin{equation}
\label{eq:stationary-start-rate}
-\lim_{n \to \infty}\,\frac{1}{n} \log{\PP_{\pi}(X_{0} > 0,X_{1} > 0,\ldots,X_{n} > 0)}
= \Lambda(a,\alpha).
\end{equation}
\end{enumerate}
\end{proposition}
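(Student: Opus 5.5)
The statement has two parts. The first is a verification of the three hypotheses of \cite[Theorem~1]{HKW2020} for symmetric $\alpha$-stable innovations. The second is the conclusions (i)--(ii), which are then quoted from that theorem. The plan is to check each moment and sign condition directly from the characteristic function \eqref{eq:stable-cf} and the known tail behaviour of stable laws. After that I would confirm that the existence statements in \cite{HKW2020} cover both the fixed-start limit \eqref{eq:Lambda} and the stationary-start limit \eqref{eq:stationary-start-rate}.

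For the hypotheses, I would use standard facts from \cite{SamorodnitskyTaqqu1994}.
\begin{enumerate}[label=\textup{(\alph*)}]
\item For $0<\alpha<2$, a symmetric stable variable has a continuous bounded density and tails $\PP(|\xi_1|>t)\sim C_\alpha\sigma^\alpha t^{-\alpha}$. Hence $\EE|\xi_1|^\delta<\infty$ for every $0<\delta<\alpha$. For $\alpha=2$ the variable is Gaussian, so every moment is finite.
\item The logarithmic moment follows from (a), since $\log(1+u)\leq C_\delta u^\delta$ for any fixed small $\delta>0$.
\item Since $\xi_1^+\leq|\xi_1|$, the bound $\EE[(\xi_1^+)^\delta]<\infty$ follows immediately from (a).
\item Symmetry and continuity of the law give $\PP(\xi_1>0)=\PP(\xi_1<0)=1/2$. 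Therefore $\PP(\xi_1>0)\PP(\xi_1<0)=1/4>0$.
\end{enumerate}

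For the conclusions, I would match the statement of \cite[Theorem~1]{HKW2020} to (i) and (ii).
\begin{itemize}
\item The theorem gives existence of the limit of $-\frac1n\log Q_n(x)$, independence of the limit from $x>0$, and equality with the rate under the stationary start. Here one must check that the stationary law $\pi$ exists. The series $\sum_k a^k\xi_k$ converges almost surely because of the log-moment; in fact $\pi$ is itself symmetric stable with scale $\sigma(1-a^\alpha)^{-1/\alpha}$.
\item The limit is finite. I would note that $Q_n(x)\geq \PP(\xi_1>0,\ldots,\xi_n>0)=2^{-n}$, because if all innovations are positive then every $X_k>0$. This gives $\Lambda\leq\log 2<\infty$.
\item The limit is strictly positive. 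Here one uses the \cite{HKW2020} result that $\Lambda>0$ whenever $a<1$ and both signs of $\xi_1$ occur with positive probability.
\end{itemize}

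The main obstacle is bookkeeping rather than analysis. I must make sure that the precise hypotheses of \cite[Theorem~1]{HKW2020} are exactly those listed; for example, the theorem may require $\EE[(\xi_1^+)^\delta]<\infty$ only for some $\delta>0$, which is weaker and still satisfied. I must also make sure that the stationary-start version (ii) and the strict positivity in (i) are stated there, and are not left for us to prove. If strict positivity is not stated explicitly, I would supply it directly. Since $\PP(\xi<-b)=p>0$ for some $b>0$, a downward step below $0$ has probability bounded below uniformly over $X_{k-1}$ in compact sets. The log-moment gives positive recurrence, so the chain returns to such compacts with positive frequency. These two facts together yield a geometric bound $Q_n\leq C\theta^n$ with $\theta<1$.
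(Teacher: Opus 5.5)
Your main line is the paper's: check the moment and sign conditions, then quote \cite{HKW2020}. Your checks (a)--(d) and your identification of $\pi$ are correct. So is the bound $Q_{n}(x)\geq\PP(\xi_{1}>0,\ldots,\xi_{n}>0)=2^{-n}$, which is a quicker route to $\Lambda\leq\log 2$ than the association argument of Proposition~\ref{prop:association}.

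What is missing is the bookkeeping for (ii) that you yourself flagged. The paper does not take (ii) from \cite[Theorem~1]{HKW2020}. It uses their equation~(10) for the modified stopping time $\widetilde{T}_{0}=\min\{k\geq 0\colon X_{k}\leq 0\}$, whose survival event $\{\widetilde{T}_{0}>n\}$ is exactly the event in (ii), including $X_{0}>0$. It combines this with the identification $\lambda_{a}(\pi)=\lambda_{a}$ following their equation~(12). That identification uses the condition $\pi[x,\infty)>0$ for every $x>0$, which you never mention. Your description of $\pi$ as symmetric $\alpha$-stable with scale $\sigma(1-a^{\alpha})^{-1/\alpha}$ gives it at once, since such a law has full support (Corollary~\ref{cor:stationary-scale}), but it has to be said. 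The paper also records $Q_{n}(x;a,\alpha,\sigma)=Q_{n}(x/\sigma;a,\alpha,1)$. Together with the $x$-independence in (i), this shows the rate does not depend on $\sigma$, which justifies writing $\Lambda(a,\alpha)$.

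Your fallback proof of $\Lambda>0$ is not needed, since the paper takes strict positivity from \cite[Theorem~1]{HKW2020}, but as written it does not work. Positive recurrence, which is all the log-moment gives you, says nothing about how often the chain visits a compact set on the rare survival event. With only a log-moment (logarithmic tails, cf.\ \cite{DenisovHKW2022}), a single innovation of order $a^{-n}$ can have probability polynomial in $n$. Such a jump lifts the chain high enough that contraction keeps it positive for $n$ steps, so geometric decay can fail outright. The hypothesis that excludes this is $\EE[(\xi_{1}^{+})^{\delta}]<\infty$, which makes such a jump cost $O(a^{\delta n})$ by Markov's inequality. A correct direct argument must use that hypothesis. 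For instance, a geometric drift condition with $V(y)=1+|y|^{\delta}$ gives exponential tails for return times to a compact set. Hence having few visits by time $n$ is exponentially unlikely, and each visit carries a killing probability bounded below.
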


Part (i) is \cite[Theorem~1]{HKW2020}. For part (ii), Hinrichs, Kolb and Wachtel introduce the modified stopping time $\widetilde{T}_{0} = \min\{k \geq 0\colon X_{k} \leq 0\}$. Their equation~(10), together with the identification $\lambda_{a}(\pi) = \lambda_{a}$ following equation~(12), gives \eqref{eq:stationary-start-rate}. The condition $\pi[x,\infty) > 0$ used there holds for every $x > 0$ in the present setting by Corollary~\ref{cor:stationary-scale}. Moreover,
\[
Q_{n}(x;a,\alpha,\sigma) = Q_{n}(x/\sigma;a,\alpha,1),
\]
so the rate is also independent of $\sigma$.

The recursion \eqref{eq:ar1} is the exact skeleton of a stable Ornstein--Uhlenbeck process. Indeed, if
\begin{equation}
\label{eq:ou-sde}
\dd Y_{t} = -\theta Y_{t} \dd t + \eta \dd L_{t},
\end{equation}
where $L$ is a standard symmetric $\alpha$-stable L\'{e}vy process, then, for $j \geq 0$,
\begin{equation}
\label{eq:ou-skeleton}
Y_{(j+1)\Delta} = e^{-\theta\Delta}Y_{j\Delta} + \eta\int_{j\Delta}^{(j+1)\Delta} e^{-\theta((j+1)\Delta-s)} \dd L_{s}.
\end{equation}
The stochastic integrals over successive intervals are independent and identically distributed. By stability, the sampled process is therefore an AR($1$) sequence with
\begin{equation}
\label{eq:ou-innovation-scale}
a = e^{-\theta\Delta},
\quad
\sigma^{\alpha} =
\eta^{\alpha}\int_{0}^{\Delta}e^{-\alpha\theta u} \dd u =
\frac{\eta^{\alpha}(1-e^{-\alpha\theta\Delta})}{\alpha\theta};
\end{equation}
see, for example, \cite{Masuda2004}. Thus $a \uparrow 1$ is both a near-unit-root limit for the chain and a dense-sampling limit for the continuous-time process.

The stable AR($1$) chain can be represented by a single L\'{e}vy path sampled on a grid asymptotically uniform in logarithmic time. For $0 < \alpha < 2$, it remains \rev{open} whether \rev{rescued sub-mesh crossings (excursions below zero completed between consecutive observations)} alter the leading persistence rate.

\subsection{Main results}

Put
\begin{equation}
\label{eq:G-def}
G_{\alpha}(a) \deq \frac{\Lambda(a,\alpha)}{\log{(1/a)}},
\quad
G_{\alpha}^{*}\deq\sup_{0 < a < 1}G_{\alpha}(a).
\end{equation}

\begin{theorem}[Main theorem]
\label{thm:main}
For every $0 < \alpha \leq 2$ and $0 < a < 1$,
\begin{equation}
\label{eq:main-bounds}
0 < \Lambda(a,\alpha) \leq \min\Bigl\{\log{2},\frac{\alpha}{2}\log{(1/a)}\Bigr\}.
\end{equation}
For every fixed $a_{0}\in(0,1)$ and every $a\in[a_{0},1)$, one has the subsampling bound
\begin{equation}
\label{eq:ceiling-lower}
\Lambda(a,\alpha) \geq
\frac{\Lambda(a_{0},\alpha)}{\bigl\lceil \log{(1/a_{0})}/\log{(1/a)} \bigr\rceil}.
\end{equation}
In particular,
\begin{equation}
\label{eq:quantitative-order}
\frac{\Lambda(a_{0},\alpha)}{\log{(1/a_{0})}+\log{(1/a)}}\log{(1/a)}
\leq \Lambda(a,\alpha) \leq \frac{\alpha}{2}\log{(1/a)}.
\end{equation}
Therefore,
\begin{equation}
\label{eq:near-unit-order}
\Lambda(a,\alpha) \asymp \log{(1/a)}, \quad a \uparrow 1,
\end{equation}
and
\begin{equation}
\label{eq:limit-sup-intro}
\lim_{a \uparrow 1}G_{\alpha}(a) = G_{\alpha}^{*} \in (0,\alpha/2].
\end{equation}
\end{theorem}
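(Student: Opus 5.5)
The proof splits into three independent pieces: the upper bounds in \eqref{eq:main-bounds}, the subsampling lower bound \eqref{eq:ceiling-lower}, and the deduction of \eqref{eq:quantitative-order}--\eqref{eq:limit-sup-intro}. Positivity $\Lambda(a,\alpha)>0$ is already part of Proposition~\ref{prop:hkw}(i). By the scaling identity after Proposition~\ref{prop:hkw}, we may take $\sigma=1$ throughout.

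\textbf{Upper bounds.} For $\Lambda\le\log 2$, start at $x>0$ and consider the event $\{\xi_1>0,\dots,\xi_n>0\}$. On it, induction gives $X_k=aX_{k-1}+\xi_k>0$ for every $k\le n$. By symmetry and continuity of the law this event has probability $2^{-n}$, so $Q_n\ge 2^{-n}$.

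For $\Lambda\le\frac{\alpha}{2}\log(1/a)$ I use the time-change representation. Iterating \eqref{eq:ar1} gives
\[
a^{-n}X_n=x+\sum_{k=1}^n a^{-k}\xi_k .
\]
Let $L$ be a symmetric $\alpha$-stable L\'evy process with $\EE[e^{iqL_t}]=e^{-t|q|^\alpha}$, and set $t_n=\sum_{k=1}^n a^{-\alpha k}$. Independent increments and stability give
\[
\Bigl(\sum_{k\le j}a^{-k}\xi_k\Bigr)_{j\le n}\dis (L_{t_j})_{j\le n}
\]
jointly in $j$. Hence
\[
Q_n(x)=\PP\bigl(x+L_{t_j}>0,\ j=1,\dots,n\bigr)\ \ge\ \PP\Bigl(\inf_{s\le t_n}L_s>-x\Bigr).
\]
For a symmetric stable process the positivity parameter is $\rho=1/2$. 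The classical fluctuation-theory result (Rogozin/Bingham: the ladder height and ladder time structure, or equivalently the Sparre Andersen identity) gives
\[
\PP\Bigl(\inf_{s\le t}L_s>-x\Bigr)\sim c(x)\,t^{-1/2},\qquad t\to\infty .
\]
Since $t_n\asymp a^{-\alpha n}$, we get $Q_n\ge c\,a^{\alpha n/2}(1+o(1))$, and taking $-\frac1n\log$ yields the claim. The main obstacle is quoting this half-line asymptotic correctly for every $0<\alpha\le 2$, together with checking that the grid representation is exact as a joint law. A crude bound $t^{-1/2-\varepsilon}$ would already suffice for the rate.

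\textbf{Subsampling lower bound.} Put $m=\lceil\log(1/a_0)/\log(1/a)\rceil$ and $b=a^m\le a_0$. The subsequence $Z_k=X_{mk}$ satisfies
\[
Z_k=bZ_{k-1}+\zeta_k,\qquad \zeta_k=\sum_{i=0}^{m-1}a^i\xi_{mk-i}.
\]
The $\zeta_k$ are i.i.d.\ symmetric $\alpha$-stable with some scale $\sigma'$, so $(Z_k)$ is an AR($1$) chain with coefficient $b$. Persistence of $X$ up to time $mn$ implies persistence of $Z$ up to time $n$. Since $\Lambda$ does not depend on $\sigma$, this gives $\Lambda(a,\alpha)\ge\Lambda(b,\alpha)/m$.

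It then remains to prove the monotonicity $\Lambda(b,\alpha)\ge\Lambda(a_0,\alpha)$ for $b\le a_0$. Drive both chains from the same $x>0$ with the same innovations. On the persistence event of the $b$-chain, induction gives $X^{a_0}_k\ge X^{b}_k>0$, because
\[
a_0X^{a_0}_{k-1}\ge bX^b_{k-1}
\]
whenever $X^{a_0}_{k-1}\ge X^b_{k-1}>0$. So the $b$-persistence event is contained in the $a_0$-persistence event, $Q_n(x;b)\le Q_n(x;a_0)$, and \eqref{eq:ceiling-lower} follows.

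\textbf{Consequences.} The inequality $\lceil u\rceil\le u+1$ with $u=\log(1/a_0)/\log(1/a)$ turns \eqref{eq:ceiling-lower} into the left side of \eqref{eq:quantitative-order}. Combined with the upper bound this gives \eqref{eq:near-unit-order}. In terms of $G_\alpha$,
\[
G_\alpha(a)\ \ge\ G_\alpha(a_0)\,\frac{\log(1/a_0)}{\log(1/a_0)+\log(1/a)} .
\]
Letting $a\uparrow1$ gives $\liminf_{a\uparrow1}G_\alpha(a)\ge G_\alpha(a_0)$ for every $a_0$, hence $\liminf\ge G^*_\alpha$. Trivially $\limsup\le G^*_\alpha$, so the limit exists and equals $G_\alpha^*$. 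Finally $G^*_\alpha>0$ because each $G_\alpha(a_0)>0$, and $G^*_\alpha\le\alpha/2$ by the upper bound, which proves \eqref{eq:limit-sup-intro}.
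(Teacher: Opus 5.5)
Your proposal is correct and follows the paper's proof essentially step for step: positivity from Proposition~\ref{prop:hkw}, the $\frac{\alpha}{2}\log(1/a)$ bound via the geometric-time embedding plus the half-line stable exit asymptotic, the lower bound via subsampling to coefficient $a^{m}$ combined with the same-innovation monotonicity coupling, and the same ceiling arithmetic for \eqref{eq:quantitative-order}--\eqref{eq:limit-sup-intro}. The only deviation is the $\log 2$ bound. There your event $\{\xi_{1}>0,\dots,\xi_{n}>0\}$ gives $Q_{n}\geq 2^{-n}$ directly, which avoids the association inequality the paper invokes in Proposition~\ref{prop:association}.
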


\begin{remark}[Scope of the main theorem]
For $0 < \alpha < 2$, the theorem leaves the value of $G_{\alpha}^{*}$ open; the Gaussian value at $\alpha=2$ follows from existing dense-sampling theory.
\end{remark}

The $\alpha/2$ upper bound in \eqref{eq:main-bounds} comes from an exact geometric-time representation,
\begin{equation}
\label{eq:embed-intro}
(X_{n})_{n \geq 0} \dis \bigl(a^{n}(x+L_{u_{n}})\bigr)_{n \geq 0},
\quad
u_{n} = \sigma^{\alpha}\sum_{j=1}^{n}a^{-\alpha j}.
\end{equation}
Continuous survival of the stable path implies survival on the grid $\{u_{n}\}$, and the one-sided stable exit exponent then gives the coefficient $\alpha/2$. For $0 < \alpha < 2$, this coefficient is strictly smaller than the value predicted by the regularly-varying-tail conjecture in \cite[Remark~20]{HKW2020}.

The lower estimate in \eqref{eq:quantitative-order} has a different origin. Subsampling every $m$-th observation produces another symmetric stable AR($1$) sequence, now with coefficient $a^{m}$. Dropping the intervening positivity constraints gives the subsampling inequality
\begin{equation}
\label{eq:refinement-intro}
\Lambda(a,\alpha) \geq \frac{1}{m}\Lambda(a^{m},\alpha),
\quad
G_{\alpha}(a) \geq G_{\alpha}(a^{m}).
\end{equation}
Combining it with monotonicity of $\Lambda$ in $a$ proves the near-unit lower bound and \eqref{eq:limit-sup-intro}.

A stationary formulation follows. If
\[
R_{t} = e^{-t/\alpha}L_{e^{t}},
\quad t\in\RR,
\]
is the Lamperti transform of stable L\'{e}vy motion and $\lambda_{\alpha}(h)$ denotes the persistence exponent of $R$ sampled at mesh $h$, then
\begin{equation}
\label{eq:lamperti-intro}
\Lambda(a,\alpha) = \lambda_{\alpha}(h),
\quad
h = \alpha\log{(1/a)},
\quad
G_{\alpha}^{*} = \alpha\lim_{h \downarrow 0}\frac{\lambda_{\alpha}(h)}{h} = \alpha\sup_{h > 0}\frac{\lambda_{\alpha}(h)}{h}.
\end{equation}
The continuous-time persistence rate of $R$ is $1/2$. Thus the conjectural sharp value $G_{\alpha}^{*} = \alpha/2$ is equivalent to a dense-sampling assertion. In physical time, Corollary~\ref{cor:ou-dense} shows that the persistence rate of the $\Delta$-skeleton of \eqref{eq:ou-sde} converges to $\theta G_{\alpha}^{*}$. The sharp conjecture is that this limit equals $\alpha\theta/2$, the continuously monitored rate.

\subsection{Relation to previous work}

For surveys of persistence and first-passage problems in probability and statistical physics, see \cite{BrayMajumdarSchehr2013,AurzadaSimon2015}. At $a=1$, \eqref{eq:ar1} is a random walk. For the boundary start $x=0$, the Sparre Andersen theorem gives, for continuous symmetric increments \cite{SparreAndersen1953},
\begin{equation}
\label{eq:sparre-andersen}
Q_{n}(0;1,\alpha,\sigma) = 2^{-2n}\binom{2n}{n} \sim \frac{1}{\sqrt{\pi n}}.
\end{equation}
For a fixed positive start the exponent remains $1/2$, while the amplitude depends on the descending-ladder renewal function.

For Gaussian innovations, Aurzada and Baumgarten \cite[Section~2.3]{AurzadaBaumgarten2011} related geometric sampling to a stationary Gaussian Ornstein--Uhlenbeck process and proved $\lambda_{\beta} \leq \beta/2$ for small $\beta$ together with coarse-grid lower bounds; their argument also gives the linear near-unit order $\lambda_{\beta} \asymp \beta$. The sharp Gaussian dense-sampling limit follows from Feldheim, Feldheim and Mukherjee \cite[Theorem~5(II) and Remark~6]{FeldheimFM2025}. Majumdar, Bray and Ehrhardt \cite{MajumdarBrayEhrhardt2001} derived the half-line integral eigenvalue problem governing persistence of the discretely sampled Gaussian Ornstein--Uhlenbeck process and studied the discrete-sampling correction.

For non-Gaussian AR($1$) sequences, Hinrichs, Kolb and Wachtel \cite{HKW2020} provide the existence theory used here; they also write $a$ for the autoregressive coefficient, and their additive decay rate $\lambda_{a}$ is our $\Lambda(a,\alpha)$. For continuous symmetric innovations, Alsmeyer et al.~\cite{AlsmeyerBRS2023} obtained a Baxter--Spitzer factorization. In our coefficient notation, the multiplicative persistence rate studied by Donnart and Simon \cite{DonnartSimon2026} is $e^{-\Lambda(a,\alpha)}$. They identify this multiplicative persistence rate as $\mu_{a}^{-1}$, where $\mu_{a}$ denotes their threshold defined through the dual first-passage generating function. Under their nonessential-singularity condition, $\mu_{a}$ is the first positive root of the resulting generating-function equation. This implicit \rev{characterization neither evaluates} $\mu_{a}$ for stable innovations \rev{nor determines} its behavior as $a \uparrow 1$. For comparison with the decay exponent, Donnart and Simon \cite[Section~4.2]{DonnartSimon2026} show in their symmetric bi-exponential example that the mean first-passage time $\EE_{x}[T_{0}]$ is of order $(1-a)^{-1/2}$ as $a \uparrow 1$. Related directions include logarithmic tails \cite{DenisovHKW2022}, discrete innovations \cite{VysotskyWachtel2023}, parameter continuity \cite{AurzadaMukherjeeZeitouni2021}, and first-passage questions for certain stable Ornstein--Uhlenbeck processes \cite{Patie2008}.


\section{The geometric-time representation}
\label{sec:embedding}

Let $L = (L_{t})_{t \geq 0}$ be the standard symmetric $\alpha$-stable L\'{e}vy process,
\begin{equation}
\label{eq:levy-cf}
\EE[e^{iqL_{t}}] = e^{-t|q|^{\alpha}}.
\end{equation}
It is $1/\alpha$-self-similar:
\[
(L_{ct})_{t \geq 0} \,\dis\, (c^{1/\alpha}L_{t})_{t \geq 0}, \quad c > 0.
\]

\begin{theorem}[Geometric-time embedding]
\label{thm:embedding}
Let $X$ satisfy \eqref{eq:ar1} with innovations normalized by \eqref{eq:stable-cf}, and set
\begin{equation}
\label{eq:un}
u_{0} = 0, \quad u_{n} \,\deq\, \sigma^{\alpha}\sum_{j=1}^{n}a^{-\alpha j} =
\sigma^{\alpha}\biggl(\frac{a^{-\alpha n}-1}{1-a^{\alpha}}\biggr).
\end{equation}
Then, as random elements of $\RR^{\NN_{0}}$,
\begin{equation}
\label{eq:embedding}
(X_{n})_{n \geq 0} \,\dis\, (a^{n}(x+L_{u_{n}}))_{n \geq 0}.
\end{equation}
In particular,
\begin{equation}
\label{eq:sampled-persistence}
Q_{n}(x;a,\alpha,\sigma) = \PP(x+L_{u_{k}} > 0,\ 1 \leq k \leq n).
\end{equation}
\end{theorem}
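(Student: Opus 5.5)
We prove \eqref{eq:embedding} by unrolling the recursion and comparing the two sides as random sequences. Iterating \eqref{eq:ar1} gives
\[
X_{n} = a^{n}x + \sum_{j=1}^{n}a^{n-j}\xi_{j} = a^{n}\Bigl(x + \sum_{j=1}^{n}a^{-j}\xi_{j}\Bigr), \quad n \geq 0,
\]
so $(X_{n})$ is the image of the partial-sum sequence $S_{n} \deq \sum_{j=1}^{n}a^{-j}\xi_{j}$, with $S_{0}=0$, under the deterministic map $(s_{n}) \mapsto (a^{n}(x+s_{n}))$. It therefore suffices to show $(S_{n})_{n\geq 0} \dis (L_{u_{n}})_{n\geq 0}$ in $\RR^{\NN_{0}}$. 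Equality of laws on $\RR^{\NN_{0}}$ is determined by the finite-dimensional distributions, since cylinder sets generate the product $\sigma$-field.

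Next we compare increments. On the left, the increments $S_{j}-S_{j-1} = a^{-j}\xi_{j}$, $j\geq 1$, are independent. By \eqref{eq:stable-cf}, each is symmetric $\alpha$-stable with characteristic function $\exp(-\sigma^{\alpha}a^{-\alpha j}|q|^{\alpha})$. On the right, $u_{n}$ is strictly increasing with $u_{j}-u_{j-1} = \sigma^{\alpha}a^{-\alpha j}$. By independence and stationarity of L\'{e}vy increments and \eqref{eq:levy-cf}, the increments $L_{u_{j}}-L_{u_{j-1}}$ are therefore independent with characteristic function $\exp(-(u_{j}-u_{j-1})|q|^{\alpha}) = \exp(-\sigma^{\alpha}a^{-\alpha j}|q|^{\alpha})$. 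The two increment sequences thus have the same law for every $n$, because both consist of independent coordinates with identical marginals. Since $S_{n}$ and $L_{u_{n}}$ are the same linear map of their first $n$ increments, the finite-dimensional laws agree, and so $(S_{n}) \dis (L_{u_{n}})$. The closed form for $u_{n}$ in \eqref{eq:un} is the geometric sum $\sum_{j=1}^{n}a^{-\alpha j} = (a^{-\alpha n}-1)/(1-a^{\alpha})$.

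Finally, \eqref{eq:sampled-persistence} follows because $a^{n}>0$, so $\{X_{k}>0\}$ corresponds to $\{x+L_{u_{k}}>0\}$. The event $\{X_{1}>0,\ldots,X_{n}>0\}$ is a measurable cylinder event, so its probability is preserved under equality in law. No step is a real obstacle. The only point needing care is matching the scale normalization: the innovation scale $\sigma^{\alpha}$ from \eqref{eq:stable-cf} must equal the time increment of $L$ under \eqref{eq:levy-cf}. This matching is exactly what fixes the factor $\sigma^{\alpha}$ in $u_{n}$.
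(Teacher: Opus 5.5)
Your proposal is correct and follows the paper's proof essentially step for step. You unroll the recursion to $a^{n}(x+S_{n})$, match the independent increments $a^{-j}\xi_{j}$ with $L_{u_{j}}-L_{u_{j-1}}$ through their common stable scale, and use $a^{k}>0$ for the persistence identity. Your explicit remarks that cylinder sets determine the law and that the closed form of $u_{n}$ is a geometric sum are harmless additions to that same argument.
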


\begin{proof}
Iterating \eqref{eq:ar1} gives
\[
X_{n} = a^{n} x+\sum_{j=1}^{n}a^{n-j}\xi_{j} =
a^{n}\biggl(x+\sum_{j=1}^{n}a^{-j}\xi_{j}\biggr).
\]
Let $S_{n} = \sum_{j=1}^{n}a^{-j}\xi_{j}$. Its increments are independent and
\[
S_{n}-S_{n-1} = a^{-n}\xi_{n}
\]
is symmetric $\alpha$-stable with scale $\sigma a^{-n}$. On the other hand,
\[
u_{n}-u_{n-1} = \sigma^{\alpha}a^{-\alpha n},
\]
so $L_{u_{n}}-L_{u_{n-1}}$ has the same stable scale. The two sequences have independent increments with identical increment laws. Hence $(S_{n})_{n \geq 0} \dis (L_{u_{n}})_{n \geq 0}$ on the path space $\RR^{\NN_{0}}$. This proves \eqref{eq:embedding}. Since $a^{k} > 0$, the persistence identity follows.
\end{proof}

\begin{corollary}[Marginal and stationary scales]
\label{cor:stationary-scale}
For every $n \geq 1$, the centered random part of $X_{n}$ is symmetric $\alpha$-stable with scale
\begin{equation}
\label{eq:marginal-scale}
\sigma_{n} = \sigma\biggl(\frac{1-a^{\alpha n}}{1-a^{\alpha}}\biggr)^{1/\alpha}.
\end{equation}
The unique stationary law is symmetric $\alpha$-stable with scale
\begin{equation}
\label{eq:stationary-scale}
\sigma_{\infty} = \sigma(1-a^{\alpha})^{-1/\alpha}.
\end{equation}
In particular, the stationary law has support all of $\RR$.
\end{corollary}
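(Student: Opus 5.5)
The plan is to read both claims directly off the embedding of Theorem~\ref{thm:embedding}.

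For the marginal scale, iterate \eqref{eq:ar1} as in that proof:
\[
X_n = a^n x + a^n L_{u_n}.
\]
The centered random part is therefore $a^n L_{u_n}$. By \eqref{eq:levy-cf}, $L_{u_n}$ is symmetric $\alpha$-stable with scale $u_n^{1/\alpha}$. Multiplying by $a^n$ multiplies the scale by $a^n$, so
\[
\sigma_n^\alpha = a^{\alpha n}u_n = \sigma^\alpha a^{\alpha n}\,\frac{a^{-\alpha n}-1}{1-a^\alpha} = \sigma^\alpha\,\frac{1-a^{\alpha n}}{1-a^\alpha},
\]
which is \eqref{eq:marginal-scale}. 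Equivalently, one can compute the characteristic function of $\sum_{j=1}^n a^{n-j}\xi_j$ from \eqref{eq:stable-cf} and sum the geometric series $\sum_j a^{\alpha(n-j)}$.

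For the stationary law, I would argue in two steps.

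\emph{Existence.} Let $\mu$ be the symmetric $\alpha$-stable law with scale $\sigma_\infty$, and let $X_0\sim\mu$ be independent of $\xi_1$. Then
\[
\EE\bigl[e^{iqX_1}\bigr] = \exp\bigl(-(a^\alpha\sigma_\infty^\alpha + \sigma^\alpha)|q|^\alpha\bigr),
\]
and $a^\alpha\sigma_\infty^\alpha + \sigma^\alpha = \sigma_\infty^\alpha$ because $\sigma_\infty^\alpha(1-a^\alpha) = \sigma^\alpha$. So $\mu$ is invariant.

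\emph{Uniqueness.} Suppose $\nu$ is any invariant law and $X_0\sim\nu$. Then $X_n = a^nX_0 + \sum_{j=1}^n a^{n-j}\xi_j$. Since $a^n\to0$, the first term tends to $0$ in probability. The second term converges in law to $\mu$ by the marginal computation, since $\sigma_n\to\sigma_\infty$, which gives pointwise convergence of the characteristic functions. By Slutsky's lemma $\nu = \lim_n \mathrm{law}(X_n) = \mu$.

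Finally, the support is all of $\RR$: a nondegenerate symmetric stable law has a continuous, strictly positive density on $\RR$, a standard fact \cite{SamorodnitskyTaqqu1994}. In particular $\pi[x,\infty)>0$ for every $x$.

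None of these steps is a real obstacle. The only point needing care is uniqueness, which reduces to the contraction $a^nX_0\to0$ combined with Slutsky's lemma.
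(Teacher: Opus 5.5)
Your proposal is correct and follows essentially the paper's route. You compute the scale of the innovation sum; your embedding computation $a^{\alpha n}u_n$ is equivalent to the paper's direct sum $\sum_j a^{\alpha(n-j)}$, which you also mention. You check invariance through the identity $\sigma_\infty^\alpha = a^\alpha\sigma_\infty^\alpha+\sigma^\alpha$, and you get uniqueness from $a^nX_0\to 0$ together with convergence in law of the innovation sum. Your one addition is the citation that a nondegenerate symmetric stable law has a strictly positive density, which justifies the full-support claim that the paper leaves implicit.
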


\begin{proof}
The random sum
\[
\sum_{j=1}^{n}a^{n-j}\xi_{j}
\]
has stable scale
\[
\sigma\biggl(\sum_{j=1}^{n}a^{\alpha(n-j)}\biggr)^{1/\alpha},
\]
which is \eqref{eq:marginal-scale}. Letting $n \to \infty$ gives \eqref{eq:stationary-scale}. Conversely, stability reduces invariance under $x \mapsto a x+\xi_{1}$ to the scale identity
\[
\sigma_{\infty}^{\alpha} = a^{\alpha}\sigma_{\infty}^{\alpha}+\sigma^{\alpha},
\]
which is satisfied by \eqref{eq:stationary-scale}. To prove uniqueness, start the recursion from any stationary law, independently of the innovations, and iterate it. The initial term $a^{n}X_{0}$ converges almost surely to zero, while the innovation sum converges in distribution to the stable law in \eqref{eq:stationary-scale}. Stationarity forces the initial law to be that limit.
\end{proof}

The grid is asymptotically uniform in logarithmic time. With $\delta = \log{(1/a)}$,
\begin{equation}
\label{eq:grid-log}
u_{n} = \frac{\sigma^{\alpha}}{1-a^{\alpha}} (e^{\alpha\delta n}-1), \quad
\log{u_{n}} = \alpha\delta n+O(1) \quad (n \to \infty).
\end{equation}
Moreover,
\begin{equation}
\label{eq:grid-ratio}
\frac{u_{n}-u_{n-1}}{u_{n}} \longrightarrow 1-a^{\alpha}, \quad
\log{\frac{u_{n+1}}{u_{n}}} \longrightarrow \alpha\delta.
\end{equation}
Thus the grid spacing grows \rev{in the time scale of $L$} but converges to $\alpha\delta$ in logarithmic time, with $\alpha\delta \to 0$ as $a \uparrow 1$.

\begin{remark}[Nonzero thresholds]
\label{rem:threshold}
For a fixed level $b$,
\[
X_{k} > b \quad \Longleftrightarrow \quad x+L_{u_{k}} > b a^{-k}.
\]
Hence persistence above $b$ becomes a stable path problem with a geometrically moving boundary. For $b=0$, the moving boundary reduces to the constant boundary $0$.
\end{remark}


\section{Bounds and a counterexample}
\label{sec:bounds}

\subsection{A stable half-line exit estimate}

For $x > 0$, let
\[
\tau_{0}^{L} \,\deq\, \inf\{t > 0\colon x+L_{t} \leq 0\}
\]
be the first exit time of $x+L$ from $(0,\infty)$, and write
\[
S_{t}^{-} = \sup_{0 \leq s \leq t}(-L_{s}).
\]

\begin{lemma}[Stable exit asymptotic and global bound]
\label{lem:stable-exit}
For every $0 < \alpha \leq 2$, there is a constant $c_{\alpha}\in(0,\infty)$ such that, for each $x > 0$,
\begin{equation}
\label{eq:exit-asymptotic}
\PP_{x}(\tau_{0}^{L} > T) \sim c_{\alpha} x^{\alpha/2}T^{-1/2}, \quad
T \to \infty.
\end{equation}
There is also a constant $C_{\alpha} < \infty$ such that, for all $x,T > 0$,
\begin{equation}
\label{eq:exit-uniform}
\PP_{x}(\tau_{0}^{L} > T) \leq C_{\alpha}\min\{1,\,x^{\alpha/2}T^{-1/2}\}.
\end{equation}
\end{lemma}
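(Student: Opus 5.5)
The plan is to reduce everything to the law of the running infimum of $L$ and use fluctuation theory for symmetric stable processes. First, by self-similarity, $\PP_{x}(\tau_{0}^{L} > T) = \PP(S_{T}^{-} < x) = \PP(S_{1}^{-} < xT^{-1/\alpha})$, since $(S^{-}_{cT})\dis c^{1/\alpha}(S^{-}_{T})$. Both claims therefore reduce to the small-ball behaviour of the single random variable $S_{1}^{-}$ near zero: it suffices to show
\[
\PP(S_{1}^{-} < y) \sim c_{\alpha} y^{\alpha/2}, \quad y \downarrow 0,
\]
and then put $y = xT^{-1/\alpha}$, so that $y^{\alpha/2} = x^{\alpha/2}T^{-1/2}$.

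To obtain this small-$y$ asymptotic I would use fluctuation theory with positivity parameter $\rho = \PP(L_{1} > 0) = 1/2$. For a stable process the descending ladder height process $H^{-}$ is a stable subordinator of index $\alpha\rho = \alpha/2$; for $\alpha=2$ this is standard Brownian motion, where $H^{-}$ is a drift, and there the explicit reflection formula $\PP(S_{1}^{-}<y)=\PP(|L_{1}|<y)\sim c y$ already suffices. The renewal function of $H^{-}$ is then $V^{-}(x) = \kappa x^{\alpha/2}$. The standard result for Lévy processes with $\rho\in(0,1)$ gives $\PP_{x}(\tau_{0}^{L} > t) \sim c\,V^{-}(x)\,t^{-\rho}$ as $t \to \infty$ for fixed $x$; see e.g.\ Bertoin's book, Chapter VIII, or Doney's Saint-Flour notes. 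Here this reads $c_{\alpha}x^{\alpha/2}T^{-1/2}$. An alternative route avoids quoting that theorem. One uses the Sparre Andersen identity $\int_{0}^{\infty} e^{-qt}\PP(S_{t}^{-}=0\text{-type})\,dt$, or more concretely Rogozin's identity, which shows that $\tau_{0}^{L}$ started from $0^{+}$ has regularly varying tail of index $1/2$. Scaling then gives the exact power form $\PP(S^{-}_{1}<y)= y^{\alpha/2}\ell(y)$, and the slowly varying function $\ell$ is forced to be constant by the self-similarity of the ladder processes.

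Given the asymptotic, the uniform bound \eqref{eq:exit-uniform} is soft. Set $f(y) = \PP(S_{1}^{-} < y)\,y^{-\alpha/2}$. This function is continuous on $(0,\infty)$, since $S_{1}^{-}$ has a continuous law for $\alpha \le 2$ symmetric, where $0$ is regular for $(-\infty,0)$. It tends to $c_{\alpha}$ as $y \downarrow 0$ and is bounded by $y^{-\alpha/2}\le 1$ for $y \ge 1$. Hence $\sup_{y>0} f(y) =: C_{\alpha}' < \infty$. Combined with the trivial bound by $1$, this gives $\PP_{x}(\tau_{0}^{L} > T) \le C_{\alpha}\min\{1, x^{\alpha/2}T^{-1/2}\}$ for all $x,T>0$.

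The main obstacle is the second step: justifying the exact $y^{\alpha/2}$ small-ball law with a genuine constant, rather than merely a regularly varying one. I would handle it by citing the known identification of the ladder height of a stable process as an $(\alpha\rho)$-stable subordinator, together with the tail asymptotic for $\tau_{0}^{L}$ in terms of the renewal function. I would verify only that $\rho=1/2$ by symmetry and that $c_{\alpha}\in(0,\infty)$.
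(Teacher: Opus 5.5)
Your proof is correct and essentially the paper's argument: self-similarity reduces both claims to the small-$y$ law $\PP(S_1^-<y)\sim c_\alpha y^{\alpha/2}$, which is taken from classical fluctuation theory for stable processes. The paper cites Bingham's small-deviation estimate with $\rho=1/2$, and you cite the equivalent ladder-height/renewal-function form; as in the paper, you use the reflection principle at $\alpha=2$ and get the uniform bound from the asymptotic plus the trivial bound by $1$.

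Two side remarks. Your Rogozin alternative is unnecessary and loose as written, since $\tau_0^L=0$ a.s.\ from the origin. Also, boundedness of $\PP(S_1^-<y)\,y^{-\alpha/2}$ on $[\epsilon,1]$ needs no continuity of the law; regularity of $0$ for $(-\infty,0)$ would in any case only exclude an atom at $0$.
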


\begin{proof}
Let $F_{\alpha}(y) = \PP(S_{1}^{-} < y)$, $y > 0$. By self-similarity,
\begin{equation}
\label{eq:exit-scaling}
\PP_{x}(\tau_{0}^{L} > T) = \PP(S_{T}^{-} < x)
= \PP(S_{1}^{-} < xT^{-1/\alpha}) = F_{\alpha}(xT^{-1/\alpha}).
\end{equation}
For $0 < \alpha < 2$, let $Z$ be a strictly stable process and write $S_{1} = \sup_{0 \leq t \leq 1}Z_{t}$. The classical small-deviation estimate states that $\PP(S_{1} \leq y) \sim c\,y^{\alpha\rho}$ as $y \downarrow 0$, where $\rho=\PP(Z_{1} > 0)$ is the positivity parameter \cite{Bingham1973}. The supremum has a continuous density \cite{DoneySavov2010}, so strict and non-strict inequalities have the same probability. Applied to $Z=-L$, for which symmetry gives $\rho = 1/2$, the estimate yields
\begin{equation}
\label{eq:F-small}
F_{\alpha}(y) \sim c_{\alpha} y^{\alpha/2}, \quad y \downarrow 0.
\end{equation}
When $\alpha = 2$, $L$ is Brownian motion with variance parameter $2$, and the reflection principle gives \rev{$F_{2}(y) = \PP(|L_{1}| < y) \sim y/\sqrt{\pi}$ as $y \downarrow 0$, because $L_{1} \sim N(0,2)$ has density $1/(2\sqrt{\pi})$ at the origin; this is} the same statement with $c_{2} = 1/\sqrt{\pi}$.

Equations \eqref{eq:exit-scaling} and \eqref{eq:F-small} prove \eqref{eq:exit-asymptotic}. They also imply $F_{\alpha}(y) \leq C_{\alpha} y^{\alpha/2}$ for all sufficiently small $y$. Enlarging $C_{\alpha}$ and using $F_{\alpha} \leq 1$ gives
\[
F_{\alpha}(y) \leq C_{\alpha}\min\{1,y^{\alpha/2}\}, \quad y > 0,
\]
which, together with \eqref{eq:exit-scaling}, yields \eqref{eq:exit-uniform}.
\end{proof}

\subsection{Discrete and continuous-time bounds}

\begin{proposition}[Continuum comparison]
\label{prop:continuum-bound}
For $x > 0$, $0 < a < 1$, and $0 < \alpha \leq 2$,
\begin{equation}
\label{eq:continuum-bound}
\Lambda(a,\alpha) \leq \frac{\alpha}{2}\log{(1/a)}.
\end{equation}
\end{proposition}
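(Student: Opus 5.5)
The plan is to compare the sampled survival event with continuous survival of the L\'{e}vy path, using the geometric-time embedding of Theorem~\ref{thm:embedding}. By \eqref{eq:sampled-persistence},
\[
Q_{n}(x;a,\alpha,\sigma) = \PP(x+L_{u_{k}} > 0,\ 1 \leq k \leq n).
\]
If $x+L_{t} > 0$ for all $t\in(0,u_{n}]$, then in particular $x+L_{u_{k}} > 0$ for $1 \leq k \leq n$. Hence $\{\tau_{0}^{L} > u_{n}\}$ is contained in the sampled survival event, and
\[
Q_{n}(x;a,\alpha,\sigma) \geq \PP_{x}(\tau_{0}^{L} > u_{n}).
\]
This monotone inclusion is the whole comparison; we only need a lower bound on $Q_{n}$, since an upper bound on $\Lambda$ is wanted.

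Next we apply the asymptotic \eqref{eq:exit-asymptotic} of Lemma~\ref{lem:stable-exit} with $T = u_{n} \to \infty$. For all large $n$ this gives
\[
Q_{n} \geq \tfrac{1}{2}c_{\alpha}x^{\alpha/2}u_{n}^{-1/2}.
\]
By \eqref{eq:un}, $u_{n} \leq \sigma^{\alpha}a^{-\alpha n}/(1-a^{\alpha})$, so $\log u_{n} \leq \alpha n\log(1/a) + O(1)$, consistent with \eqref{eq:grid-log}. Taking logarithms,
\[
-\frac{1}{n}\log Q_{n} \leq \frac{1}{2n}\log u_{n} + \frac{O(1)}{n} = \frac{\alpha}{2}\log(1/a) + O(1/n).
\]
Letting $n\to\infty$ and invoking the existence of the limit in Proposition~\ref{prop:hkw}(i) yields \eqref{eq:continuum-bound}. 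The bound holds for every fixed $x>0$, and that limit is independent of $x$ and $\sigma$.

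There is no real obstacle here. The only points needing care are that the constant $c_{\alpha}$ is strictly positive, so the logarithm of the lower bound is finite, and the matching of the $\sigma^{\alpha}$ normalization of the grid with the standard process $L$ in \eqref{eq:levy-cf}. Both are already settled by Lemma~\ref{lem:stable-exit} and Theorem~\ref{thm:embedding}. Alternatively, one could avoid the asymptotic and use only a crude positive lower bound $F_{\alpha}(y) \geq c\,y^{\alpha/2}$ for small $y$, which follows from \eqref{eq:F-small}. The same computation then goes through unchanged.
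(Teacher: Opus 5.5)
Your proposal is correct and follows the paper's proof: the inclusion of $\{\tau_{0}^{L} > u_{n}\}$ in the sampled survival event via the geometric-time embedding, then the exit asymptotic of Lemma~\ref{lem:stable-exit} combined with $\log u_{n} = \alpha n\log(1/a) + O(1)$. The paper additionally records the resulting amplitude lower bound, which is not needed for the rate inequality.
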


\begin{proof}
Continuous survival of the stable path up to $u_{n}$ implies positivity at all the sampling times. By \eqref{eq:sampled-persistence},
\[
Q_{n}(x;a,\alpha,\sigma) \geq \PP_{x}(\tau_{0}^{L} > u_{n}).
\]
\rev{Lemma~\ref{lem:stable-exit} gives, as $n \to \infty$,}
\[
\rev{\PP_{x}(\tau_{0}^{L} > u_{n}) = F_{\alpha}(x\mku u_{n}^{-1/\alpha}) \sim c_{\alpha}x^{\alpha/2}u_{n}^{-1/2},}
\]
\rev{so that, by \eqref{eq:grid-log},}
\[
\log{\PP_{x}(\tau_{0}^{L} > u_{n})} = -\frac{1}{2}\log{u_{n}}+O(1) =
-\frac{\alpha n}{2}\log{(1/a)}+O(1).
\]
Apply $-n^{-1}\log$ and let $n \to \infty$. For the amplitude, the exact formula \eqref{eq:un} gives
\[
u_{n}^{-1/2} = \sigma^{-\alpha/2}(1-a^{\alpha})^{1/2}a^{\alpha n/2}(1-a^{\alpha n})^{-1/2}.
\]
Together with \eqref{eq:exit-asymptotic}, the same comparison gives the asymptotic amplitude lower bound
\begin{equation}
\label{eq:amplitude-lower-bound}
\liminf_{n \to \infty} a^{-\alpha n/2}Q_{n}(x;a,\alpha,\sigma) \geq
c_{\alpha}\Bigl(\frac{x}{\sigma}\Bigr)^{\alpha/2}(1-a^{\alpha})^{1/2}.
\end{equation}
\end{proof}

\begin{proposition}[Association bound]
\label{prop:association}
For the continuous symmetric innovations considered here, $x \geq 0$, and $n \geq 1$,
\begin{equation}
\label{eq:association}
Q_{n}(x;a,\alpha,\sigma) \geq 2^{-n}.
\end{equation}
Passing to the exponential rate gives
\begin{equation}
\label{eq:log2}
\Lambda(a,\alpha) \leq \log{2}.
\end{equation}
\end{proposition}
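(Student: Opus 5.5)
Since $x \geq 0$ and the events $\{X_k>0\}$ are increasing functions of the innovations, the plan is to use association (Harris--FKG). By \eqref{eq:ar1}, each coordinate
\[
X_k = a^k x + \sum_{j=1}^{k} a^{k-j}\xi_j
\]
is a coordinatewise nondecreasing function of the independent vector $(\xi_1,\dots,\xi_n)$, because all coefficients $a^{k-j}$ are positive. Hence the indicators $\1\{X_k>0\}$, $1\leq k\leq n$, are nondecreasing functions of independent random variables. Independent real random variables are associated, so Harris's inequality applies, and by induction on the number of factors
\[
Q_n(x;a,\alpha,\sigma) = \EE\Bigl[\prod_{k=1}^{n}\1\{X_k>0\}\Bigr] \geq \prod_{k=1}^{n}\PP_x(X_k>0).
\]

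Next I will bound each marginal from below. By Corollary~\ref{cor:stationary-scale}, $X_k = a^k x + \sigma_k Z$ with $Z$ symmetric, continuous and $\sigma_k>0$. Therefore
\[
\PP_x(X_k>0) = \PP(Z > -a^k x/\sigma_k) \geq \PP(Z>0) = \tfrac12,
\]
using $x\geq 0$, symmetry, and $\PP(Z=0)=0$ from continuity. This gives $Q_n \geq 2^{-n}$, which is \eqref{eq:association}. Taking $-n^{-1}\log$ and letting $n\to\infty$ with Proposition~\ref{prop:hkw}(i) (for $x>0$) yields \eqref{eq:log2}.

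The only delicate point is the justification of Harris's inequality for unbounded, non-smooth monotone functions. I will argue it by the standard route: association holds for independent real variables (one-dimensional association via the covariance identity, then the tensorization induction on coordinates), and it extends to indicator functions of monotone events, which are bounded and measurable. I do not expect a genuine obstacle there. I would also note that $x=0$ is covered, since the marginal bound still gives exactly $\frac12$.
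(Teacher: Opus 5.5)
Your proposal is correct and follows essentially the same argument as the paper. Both use association of the independent innovations applied to the increasing events $\{X_k>0\}$ to bound $Q_n$ below by the product of marginals, and then use symmetry and continuity to show each marginal $\PP_x(X_k>0)$ is at least $\tfrac12$ when $x\geq 0$.
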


\begin{proof}
Each event $\{X_{k} > 0\}$ is increasing in the independent innovations $\xi_{1},\ldots,\xi_{k}$, because
\[
X_{k} = a^{k} x+\sum_{j=1}^{k}a^{k-j}\xi_{j}
\]
has nonnegative coefficients. Independent random variables are associated, and increasing functions preserve association \cite{EsaryProschanWalkup1967}. Therefore
\[
\PP\Bigl(\bigcap_{k=1}^{n}\{X_{k} > 0\}\Bigr) \geq 
\prod_{k=1}^{n}\PP(X_{k} > 0).
\]
The random part of $X_{k}$ has a continuous symmetric law, while the deterministic term $a^{k}x$ is nonnegative. Thus every factor is at least $1/2$.
\end{proof}

\begin{proposition}[Continuous stable OU persistence]
\label{prop:continuous-ou}
Let $Y$ solve \eqref{eq:ou-sde}, with $\theta, \eta, x > 0$. Then
\begin{equation}
\label{eq:ou-full-asymptotic}
\PP_{x}(Y_{s} > 0,\ 0 \leq s \leq t) \sim
c_{\alpha} x^{\alpha/2} \Bigl(\frac{\alpha\theta}{\eta^{\alpha}}\Bigr)^{1/2} e^{-\alpha\theta t/2}, \quad t \to \infty.
\end{equation}
In particular, its continuous-time persistence rate is $\alpha\theta/2$.
\end{proposition}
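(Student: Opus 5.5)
We represent the stable Ornstein--Uhlenbeck process as a time-changed, exponentially rescaled stable L\'evy process, in the continuous-time analogue of Theorem~\ref{thm:embedding}. Then continuous survival of $Y$ on $[0,t]$ becomes continuous survival of $x+L$ on $[0,v(t)]$ for an explicit clock $v$, and Lemma~\ref{lem:stable-exit} gives the asymptotic directly.

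\emph{Step 1 (representation).} Solving \eqref{eq:ou-sde} by variation of constants gives
\[
Y_{t} = e^{-\theta t}\Bigl(x+\eta\int_{0}^{t}e^{\theta s}\dd L_{s}\Bigr).
\]
The process $M_{t} = \eta\int_{0}^{t}e^{\theta s}\dd L_{s}$ has independent increments. Each increment $M_{t}-M_{r}$ is symmetric $\alpha$-stable with characteristic exponent $|q|^{\alpha}\eta^{\alpha}\int_{r}^{t}e^{\alpha\theta s}\dd s$, and $M$ is a càdlàg process. Put
\[
v(t) = \eta^{\alpha}\int_{0}^{t}e^{\alpha\theta s}\dd s = \frac{\eta^{\alpha}(e^{\alpha\theta t}-1)}{\alpha\theta}.
\]
Then $(M_{t})_{t\geq 0}$ and $(L_{v(t)})_{t\geq 0}$ have the same finite-dimensional distributions. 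Both processes have càdlàg paths with independent increments, so they agree in law on Skorokhod space. Because $v$ is continuous and strictly increasing, the event $\{Y_{s} > 0,\ 0 \leq s \leq t\}$ has the same probability as $\{x+L_{w} > 0,\ 0 \leq w \leq v(t)\}$. Hence
\[
\PP_{x}(Y_{s} > 0,\ 0 \leq s \leq t) = \PP_{x}(\tau_{0}^{L} > v(t)).
\]
The event for $L$ requires $\inf_{[0,v(t)]}(x+L) > 0$, while $\tau_{0}^{L} > v(t)$ allows the infimum to be approached without being attained. These events agree up to the atom $\{S^{-}_{v(t)} = x\}$, which has probability zero by the continuity of the law of the supremum already used in Lemma~\ref{lem:stable-exit}.

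\emph{Step 2 (asymptotics).} Lemma~\ref{lem:stable-exit} gives $\PP_{x}(\tau_{0}^{L} > T)\sim c_{\alpha}x^{\alpha/2}T^{-1/2}$. Substituting $T = v(t)$ yields
\[
v(t)^{-1/2} \sim \Bigl(\frac{\alpha\theta}{\eta^{\alpha}}\Bigr)^{1/2}e^{-\alpha\theta t/2},
\]
which is \eqref{eq:ou-full-asymptotic}. Taking $-t^{-1}\log$ then gives the rate $\alpha\theta/2$.

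The only delicate point is the path-level identification in Step 1. For $\alpha<2$ the stochastic integral must be understood pathwise, for instance by integrating by parts, $\int_{0}^{t}e^{\theta s}\dd L_{s} = e^{\theta t}L_{t}-\theta\int_{0}^{t}e^{\theta s}L_{s}\dd s$. This makes $M$ càdlàg, so finite-dimensional equality of laws upgrades to equality of laws of the running infimum. To avoid Skorokhod-space arguments, I would compute the infimum over rational times, where finite-dimensional distributions suffice, and then use right-continuity. The remaining work is routine.
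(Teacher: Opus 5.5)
Your proposal is correct and follows the paper's proof step for step: the variation-of-constants formula, the identification of $\eta\int_{0}^{t}e^{\theta s}\dd L_{s}$ with $L$ run on the clock $\eta^{\alpha}(e^{\alpha\theta t}-1)/(\alpha\theta)$, the positivity of $e^{-\theta t}$, and Lemma~\ref{lem:stable-exit} applied as this clock tends to infinity. Your extra care about c\`{a}dl\`{a}g versions and the null atom $\{S^{-}_{v(t)}=x\}$ supplies path-level details the paper leaves implicit; note, though, that by right-continuity $\{x+L_{w}>0,\ 0\leq w\leq v(t)\}$ equals $\{\tau_{0}^{L}>v(t)\}$ exactly, so the null atom is needed only when you pass to the event $\{\inf(x+L)>0\}$ computed over rational times.
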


\begin{proof}
The mild solution is
\[
Y_{t} = e^{-\theta t}\Bigl(x+\eta\int_{0}^{t} e^{\theta s} \dd L_{s}\Bigr).
\]
The process
\[
Z_{t} = \eta\int_{0}^{t} e^{\theta s} \dd L_{s}
\]
has independent increments, and its increment over $[r,t]$ has characteristic exponent
\[
|q|^{\alpha}\eta^{\alpha} \int_{r}^{t} e^{\alpha\theta s} \dd s.
\]
Matching these independent increments with those of $L$\rev{, as in the proof of Theorem~\ref{thm:embedding},} gives the process identity
\[
(Z_{t})_{t \geq 0} \dis (L_{\tau(t)})_{t \geq 0}, \quad
\tau(t) = \frac{\eta^{\alpha}(e^{\alpha\theta t}-1)}{\alpha\theta}.
\]
Since the factor $e^{-\theta t}$ is positive,
\[
\PP_{x}(Y_{s} > 0,\ 0 \leq s \leq t) = \PP_{x}(\tau_{0}^{L} > \tau(t)).
\]
Now apply \eqref{eq:exit-asymptotic} and
\[
\tau(t)^{-1/2} \sim \Bigl(\frac{\alpha\theta}{\eta^{\alpha}}\Bigr)^{1/2} e^{-\alpha\theta t/2}.
\]
\end{proof}

\subsection{The regularly-varying-tail conjecture}

Suppose that the right tail of the innovations has the form
\[
\PP(\xi_{1} > u) = u^{-r}\rev{\ell}(u),
\]
where $\rev{\ell}$ is slowly varying. Proposition~19 and Remark~20 in \cite{HKW2020} give $\Lambda(a,\alpha) \leq r\log{(1/a)}$ and equality when $\rev{\ell}(x) = O((\log{x})^{-2r-2})$. Remark~20 conjectures equality under regular variation alone.

\begin{corollary}[Counterexample to the stable specialization]
\label{cor:hkw-counterexample}
Let $0 < \alpha < 2$. Symmetric $\alpha$-stable innovations disprove the identity conjectured in \cite[Remark~20]{HKW2020}. Their right tail is regularly varying with index $-\alpha$, so $r=\alpha$ in the notation of \cite[Eq.~(50)]{HKW2020}, whereas
\begin{equation}
\label{eq:hkw-gap}
\Lambda(a,\alpha) \leq \frac{\alpha}{2}\log{(1/a)} < \alpha\log{(1/a)}, 
\quad 0 < a < 1.
\end{equation}
\end{corollary}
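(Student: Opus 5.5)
This is essentially immediate from Proposition~\ref{prop:continuum-bound}; the work is in checking that stable innovations really fall under the hypotheses of \cite[Remark~20]{HKW2020}. I will first verify that setting, then compare the two exponents.

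\textbf{Step 1: the tail hypothesis.} For $0<\alpha<2$ the classical stable tail asymptotic \cite{SamorodnitskyTaqqu1994} gives
\[
\PP(\xi_{1}>u)\sim C_{\alpha}\sigma^{\alpha}u^{-\alpha}/2, \qquad u\to\infty,
\]
where $C_{\alpha}>0$ is the usual stable constant. Hence $\PP(\xi_{1}>u)=u^{-\alpha}\ell(u)$ with $\ell(u)\to C_{\alpha}\sigma^{\alpha}/2$. A function converging to a positive constant is slowly varying, so the right tail is regularly varying with index $-\alpha$, that is, $r=\alpha$. The remaining standing assumptions of \cite{HKW2020} were already checked in Proposition~\ref{prop:hkw}. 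The conjectured identity of Remark~20 therefore reads $\Lambda(a,\alpha)=\alpha\log(1/a)$ for every $0<a<1$.

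\textbf{Step 2: the contradiction.} Proposition~\ref{prop:continuum-bound} gives $\Lambda(a,\alpha)\leq\frac{\alpha}{2}\log(1/a)$. Since $\log(1/a)>0$ for $0<a<1$,
\[
\tfrac{\alpha}{2}\log(1/a)<\alpha\log(1/a).
\]
This proves \eqref{eq:hkw-gap}, so the conjectured equality fails for every $a\in(0,1)$.

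\textbf{Step 3: consistency with the proved case.} This is the only delicate point. Since $\ell$ tends to a positive constant, it is not $O((\log u)^{-2r-2})$. So the sufficient condition of \cite[Remark~20]{HKW2020} fails, and the counterexample does not contradict their proved result. It shows only that their extra logarithmic decay on $\ell$ cannot be dropped.

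The restriction $\alpha<2$ is needed because at $\alpha=2$ the tail is Gaussian rather than regularly varying.
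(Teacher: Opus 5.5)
Your proposal is correct and follows the paper's proof: you identify the right tail as regularly varying with index $-\alpha$ and a slowly varying factor $\ell$ tending to a positive constant, so $r=\alpha$ and the conjectured value is $\alpha\log(1/a)$, and you then invoke Proposition~\ref{prop:continuum-bound}. Your Step~3 matches the paper's subsequent remark: stable laws fail the extra decay hypothesis on $\ell$, so only the conjectural extension is contradicted, not the result HKW prove under that stronger hypothesis.
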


\begin{proof}
A symmetric $\alpha$-stable law with $0 < \alpha < 2$ has
\[
\PP(\xi_{1} > u) = u^{-\alpha}\rev{\ell}(u), \quad \rev{\ell}(u) \longrightarrow c_{\alpha,\sigma}\in (0,\infty).
\]
Thus the conjectured value is $\alpha\log{(1/a)}$. Proposition~\ref{prop:continuum-bound} gives \eqref{eq:hkw-gap}.
\end{proof}

\begin{remark}[Scope of the counterexample]
Stable laws satisfy the regular-variation assumption in \cite[Remark~20]{HKW2020}, but their slowly varying factor converges to a positive constant and does not satisfy the additional decay hypothesis used for the proved equality in that paper. The corollary contradicts the conjectural extension, not the theorem under the stronger hypothesis. Heuristically, the value $\alpha\log{(1/a)}$ corresponds to a big-jump mechanism. An innovation on the scale $a^{-n}$ has probability of order $a^{\alpha n}$\rev{, and a single such jump lifts the chain to a height from which the geometric contraction alone keeps it positive for $O(n)$ further steps}. At the fixed boundary this mechanism is dominated by the continuous-survival contribution $\gtrsim a^{\alpha n/2}$ from Proposition~\ref{prop:continuum-bound}, whose exponent carries the universal Sparre--Andersen value $\tfrac{1}{2}$. Proposition~19 of \cite{HKW2020} instead concerns high-threshold first passage, where the big-jump mechanism is relevant. The Gaussian case is not regularly varying.
\end{remark}


\section{Subsampling and the near-unit-root limit}
\label{sec:refinement}

\subsection{Stable closure under subsampling}

\begin{lemma}[Subsampling]
\label{lem:subsampling}
Fix $m \in \NN$ and put $Y_{j} = X_{jm}$. Then
\begin{equation}
\label{eq:subsampled-ar}
Y_{j} = a^{m}Y_{j-1}+\zeta_{j}, \quad Y_{0} = x,
\end{equation}
where
\begin{equation}
\label{eq:block-innovation}
\zeta_{j} = \sum_{i=1}^{m} a^{m-i}\xi_{(j-1)m+i}
\end{equation}
are independent symmetric $\alpha$-stable variables with scale
\begin{equation}
\label{eq:block-scale}
\sigma_{m} = \sigma\Bigl(\frac{1-a^{\alpha m}}{1-a^{\alpha}}\Bigr)^{1/\alpha}.
\end{equation}
\end{lemma}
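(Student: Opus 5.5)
The proof is a direct iteration of the recursion \eqref{eq:ar1} over a block of $m$ steps, followed by the closure of symmetric stable laws under linear combinations of independent summands.

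\textbf{Step 1: the block recursion.} Fix $j \geq 1$ and iterate \eqref{eq:ar1} from time $(j-1)m$ to time $jm$. Induction on $i=1,\ldots,m$ gives
\[
X_{(j-1)m+i} = a^{i}X_{(j-1)m} + \sum_{l=1}^{i} a^{i-l}\xi_{(j-1)m+l}.
\]
Taking $i=m$ yields $Y_{j} = a^{m}Y_{j-1} + \zeta_{j}$ with $\zeta_{j}$ as in \eqref{eq:block-innovation}. Also $Y_{0} = X_{0} = x$.

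\textbf{Step 2: independence.} The variable $\zeta_{j}$ is a measurable function of the block $(\xi_{(j-1)m+1},\ldots,\xi_{jm})$ alone. For distinct $j$ these blocks are disjoint, so the $\zeta_{j}$ are independent because the $\xi_{n}$ are. They are identically distributed because the coefficients $a^{m-i}$ do not depend on $j$.

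\textbf{Step 3: the law of $\zeta_{j}$.} By independence and \eqref{eq:stable-cf},
\[
\EE[e^{iq\zeta_{j}}] = \prod_{i=1}^{m}\exp\bigl(-\sigma^{\alpha}a^{\alpha(m-i)}|q|^{\alpha}\bigr) = \exp\Bigl(-\sigma^{\alpha}|q|^{\alpha}\sum_{k=0}^{m-1}a^{\alpha k}\Bigr).
\]
Summing the geometric series gives $\sigma^{\alpha}(1-a^{\alpha m})/(1-a^{\alpha})$, so $\zeta_{j}$ is symmetric $\alpha$-stable with the scale $\sigma_{m}$ in \eqref{eq:block-scale}. This agrees with the marginal scale \eqref{eq:marginal-scale} of Corollary~\ref{cor:stationary-scale}. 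It also follows from Theorem~\ref{thm:embedding}: there $a^{-jm}\zeta_{j}$ corresponds to the increment $L_{u_{jm}}-L_{u_{(j-1)m}}$.

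There is no substantive obstacle. The only point that needs care is that the scale combines through $\alpha$-th powers of the coefficients, $|a^{m-i}|^{\alpha}$, and not through their squares. This is what makes the new chain a symmetric stable AR($1$) sequence with coefficient $a^{m}$ and innovation scale $\sigma_{m}$.
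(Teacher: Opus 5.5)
Your proposal is correct and follows the same route as the paper: iterate the recursion over a block of length $m$, get independence from the disjoint innovation blocks, and obtain the scale from stability together with the geometric sum $\sum_{i=1}^{m}a^{\alpha(m-i)}$. Your explicit characteristic-function computation and the cross-check against the embedding, where $a^{-jm}\zeta_{j}$ matches $L_{u_{jm}}-L_{u_{(j-1)m}}$, just spell out details the paper leaves implicit.
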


\begin{proof}
Iterating \eqref{eq:ar1} over a block of length $m$ gives \eqref{eq:subsampled-ar}--\eqref{eq:block-innovation}. Distinct block sums use disjoint innovations. Stability and
\[
\sum_{i=1}^{m}a^{\alpha(m-i)} = \frac{1-a^{\alpha m}}{1-a^{\alpha}}
\]
give \eqref{eq:block-scale}.
\end{proof}

\begin{theorem}[Subsampling inequality]
\label{thm:refinement}
For every $m\in\NN$,
\begin{equation}
\label{eq:refinement}
\Lambda(a,\alpha) \geq \frac{1}{m}\Lambda(a^{m},\alpha), \quad
G_{\alpha}(a) \geq G_{\alpha}(a^{m}).
\end{equation}
\end{theorem}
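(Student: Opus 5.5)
The plan is to compare the survival event of the original chain with that of the subsampled chain of Lemma~\ref{lem:subsampling}, simply by discarding constraints. Fix $m\in\NN$ and $x>0$, and set $Y_{j}=X_{jm}$. For every $n$, the event $\{X_{1}>0,\ldots,X_{nm}>0\}$ is contained in $\{Y_{1}>0,\ldots,Y_{n}>0\}$. Hence
\[
Q_{nm}(x;a,\alpha,\sigma) \leq \PP_{x}(Y_{1}>0,\ldots,Y_{n}>0).
\]
By Lemma~\ref{lem:subsampling}, $Y$ is a symmetric $\alpha$-stable AR($1$) sequence started at $x$, with coefficient $a^{m}\in(0,1)$ and innovation scale $\sigma_{m}$ given by \eqref{eq:block-scale}. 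The right-hand side is therefore exactly $Q_{n}(x;a^{m},\alpha,\sigma_{m})$.

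Next I take $-\frac{1}{nm}\log$ of both sides and let $n\to\infty$. By Proposition~\ref{prop:hkw}(i), the left-hand side converges along the subsequence $nm$ to $\Lambda(a,\alpha)$. The right-hand side tends to $\frac1m\Lambda(a^{m},\alpha)$. This uses the fact that the rate does not depend on $x$ or $\sigma$: the scaling identity $Q_{n}(x;a,\alpha,\sigma)=Q_{n}(x/\sigma;a,\alpha,1)$ stated after Proposition~\ref{prop:hkw} shows that the scale $\sigma_{m}$ is irrelevant. The inequality reverses under $-\log$, which gives $\Lambda(a,\alpha)\geq \frac1m\Lambda(a^{m},\alpha)$.

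The statement for $G_{\alpha}$ then follows by dividing by $\log(1/a)>0$ and using $\log(1/a^{m})=m\log(1/a)$:
\[
G_{\alpha}(a^{m})=\frac{\Lambda(a^{m},\alpha)}{m\log(1/a)}\leq \frac{\Lambda(a,\alpha)}{\log(1/a)}=G_{\alpha}(a).
\]

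There is no real obstacle here. The only points needing care are the two facts that make the subsampled chain fit the framework of Proposition~\ref{prop:hkw}. First, the block innovations must be i.i.d.\ symmetric stable, which Lemma~\ref{lem:subsampling} provides. Second, passing to the limit along multiples of $m$ must be legitimate, which it is because the full limit exists.
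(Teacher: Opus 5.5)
Your proof is correct and matches the paper's. Both drop the positivity constraints at times that are not multiples of $m$, identify the subsampled chain via Lemma~\ref{lem:subsampling}, and invoke the $x$- and $\sigma$-independence of the rate from Proposition~\ref{prop:hkw}(i). The only cosmetic difference is that you pass to the limit along the subsequence $nm$, which is legitimate since the full limit exists, whereas the paper works with general $n$, sets $r=\lfloor n/m\rfloor$, and uses $r/n\to 1/m$.
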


\begin{proof}
Let $r = \lfloor n/m\rfloor$. Dropping all positivity constraints whose indices are not multiples of $m$ enlarges the event,
\[
\{X_{k} > 0,\ 1 \leq k \leq n\} \subseteq \{X_{jm} > 0,\ 1 \leq j \leq r\}.
\]
By Lemma~\ref{lem:subsampling}, the event on the right is the length-$r$ persistence event for an AR($1$) sequence with coefficient $a^{m}$, innovation scale $\sigma_{m}$, and starting point $x$. Scale invariance and the starting-point independence of the exponential rate (Proposition~\ref{prop:hkw}(i)) imply
\[
-\lim_{r \to \infty}\,\frac{1}{r}\log{\PP(X_{jm} > 0,\ 1 \leq j \leq r)} =
\Lambda(a^{m},\alpha).
\]
Applying $-n^{-1}\log$ and using $r/n \to 1/m$ proves the first inequality. The second follows from $\log{(1/a^{m})} = m\log{(1/a)}$.
\end{proof}

For $\alpha = 2$, the coarse-grid inclusion underlying Theorem~\ref{thm:refinement} appears in the proof of \cite[Theorem~2.7]{AurzadaBaumgarten2011} (Lemma~2.10 there). Stable closure of the block innovations extends the subsampling inequality \eqref{eq:refinement} to the full range $0 < \alpha \leq 2$.

\subsection{Monotonicity and consequences}

\begin{lemma}[Monotonicity]
\label{lem:monotonicity}
For fixed $x > 0$ and $n$, the map $a\mapsto Q_{n}(x;a,\alpha,\sigma)$ is nondecreasing on $(0,1)$. Hence $a\mapsto\Lambda(a,\alpha)$ is nonincreasing.
\end{lemma}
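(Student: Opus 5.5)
We prove the monotonicity by a pathwise coupling that runs the chains for two coefficients on the same innovations. Fix $0 < a \leq b < 1$, $x > 0$, and a single i.i.d.\ symmetric $\alpha$-stable sequence $(\xi_{k})$. Let $X^{(a)}$ and $X^{(b)}$ be the chains \eqref{eq:ar1} with coefficients $a$ and $b$, both started at $x$ and both driven by $(\xi_{k})$. Each chain then has the correct law, so
\[
Q_{n}(x;a,\alpha,\sigma) = \PP(X^{(a)}_{k} > 0,\ 1 \leq k \leq n),
\]
and similarly for $b$. It therefore suffices to prove the pathwise inclusion
\[
\{X^{(a)}_{k} > 0,\ 1 \leq k \leq n\} \subseteq \{X^{(b)}_{k} > 0,\ 1 \leq k \leq n\}.
\]

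The inclusion follows by induction on $k$, with the claim that on the event $\{X^{(a)}_{1} > 0,\ldots,X^{(a)}_{k} > 0\}$ one has $X^{(b)}_{j} \geq X^{(a)}_{j}$ for all $j \leq k$. The base case is $X^{(b)}_{0} = X^{(a)}_{0} = x > 0$. For the inductive step, suppose $X^{(b)}_{k-1} \geq X^{(a)}_{k-1} \geq 0$; this holds for $k-1 = 0$ because $x > 0$, and for $k-1 \geq 1$ by the positivity hypothesis. Then
\[
X^{(b)}_{k} - X^{(a)}_{k} = bX^{(b)}_{k-1} - aX^{(a)}_{k-1} \geq (b-a)X^{(a)}_{k-1} \geq 0,
\]
because $b \geq 0$ and $X^{(b)}_{k-1} - X^{(a)}_{k-1} \geq 0$. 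Consequently $X^{(b)}_{k} \geq X^{(a)}_{k} > 0$. The key point is that the chain with the smaller coefficient is nonnegative up to the time at which we compare, so the extra factor $b$ can only push the chain upward. Taking probabilities gives $Q_{n}(x;a,\alpha,\sigma) \leq Q_{n}(x;b,\alpha,\sigma)$.

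The statement for $\Lambda$ follows by applying $-n^{-1}\log$ and letting $n \to \infty$: the limit exists for each coefficient and fixed $x$ by Proposition~\ref{prop:hkw}(i), so $\Lambda(b,\alpha) \leq \Lambda(a,\alpha)$.

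The only delicate step is the inductive comparison. It uses positivity of the $a$-chain at step $k-1$, which the persistence event provides. Monotonicity would fail without it, for instance for negative values or for a general threshold. The case $k=1$ is covered by the assumption $x > 0$.
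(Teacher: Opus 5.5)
Your proof is correct and is the same argument as the paper's. Both couple the two chains through common innovations and a common start, then argue by induction via the identity $X^{(b)}_{k}-X^{(a)}_{k} = b\bigl(X^{(b)}_{k-1}-X^{(a)}_{k-1}\bigr)+(b-a)X^{(a)}_{k-1}$, with nonnegativity of the smaller-coefficient chain supplied by its survival event; the only thing to fix is your closing aside, since the same proof works verbatim for any nonnegative threshold and it is only negative thresholds that break it.
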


\begin{proof}
Fix $0 < a_{1} < a_{2} < 1$, and couple the two chains using the same innovations and starting point. On the event that the $a_{1}$-chain is positive through time $n$, induction gives $X_{j}(a_{2}) \geq X_{j}(a_{1})$ for every $j \leq n$. Indeed,
\[
X_{j}(a_{2})-X_{j}(a_{1})
=
a_{2}\bigl(X_{j-1}(a_{2})-X_{j-1}(a_{1})\bigr)
+(a_{2}-a_{1})X_{j-1}(a_{1}),
\]
and the right-hand side is nonnegative whenever the induction hypothesis holds and $X_{j-1}(a_{1}) \geq 0$. Thus survival for $a_{1}$ implies survival for $a_{2}$. Passing to exponential rates gives the second assertion. This coupling comparison is also used in \cite[Section~2.2.1]{AurzadaMukherjeeZeitouni2021}.
\end{proof}

\begin{remark}[Where stability enters]
The near-unit lower bound has three ingredients. The monotonicity coupling of Lemma~\ref{lem:monotonicity} uses only that $(x,a)\mapsto a x+\xi$ is nondecreasing in $x$ and, for $x \geq 0$, in $a$. Along the survival event for the smaller-$a$ chain, the preceding states are nonnegative, so the finite-horizon comparison holds for arbitrary innovations. The subsampling inclusion $\{X_{k} > 0,\ 1 \leq k \leq n\}\subseteq\{X_{jm} > 0,\ 1 \leq j \leq r\}$ in Theorem~\ref{thm:refinement} is equally general: it merely drops constraints. Stability enters only in Lemma~\ref{lem:subsampling}, where closure under the block sums \eqref{eq:block-innovation} identifies the subsampled chain, up to scale, with the same family at coefficient $a^{m}$. The subsampling inequality \eqref{eq:refinement} therefore extends to any innovation family for which the block sums \eqref{eq:block-innovation} remain within the family up to a scale that does not affect the exponential rate.
\end{remark}

\begin{corollary}[Near-unit limit and global supremum]
\label{cor:order}
For each fixed $a_{0}\in(0,1)$, inequalities \eqref{eq:ceiling-lower} and \eqref{eq:quantitative-order} hold for all $a \in [a_{0},1)$. Moreover,
\begin{equation}
\label{eq:limit-sup}
\lim_{a \uparrow 1}G_{\alpha}(a) = \sup_{0 < a < 1}G_{\alpha}(a).
\end{equation}
\end{corollary}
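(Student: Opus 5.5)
Fix $a_{0}\in(0,1)$ and $a\in[a_{0},1)$. We first prove \eqref{eq:ceiling-lower}. Let
\[
m=\bigl\lceil \log(1/a_{0})/\log(1/a)\bigr\rceil \geq 1.
\]
By construction $m\log(1/a)\geq\log(1/a_{0})$, that is, $a^{m}\leq a_{0}$. Theorem~\ref{thm:refinement} gives $\Lambda(a,\alpha)\geq m^{-1}\Lambda(a^{m},\alpha)$. Lemma~\ref{lem:monotonicity} says $\Lambda(\cdot,\alpha)$ is nonincreasing, so $\Lambda(a^{m},\alpha)\geq\Lambda(a_{0},\alpha)$. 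Chaining the two inequalities yields \eqref{eq:ceiling-lower}.

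For the left half of \eqref{eq:quantitative-order}, use $\lceil y\rceil\leq y+1$ with $y=\log(1/a_{0})/\log(1/a)$:
\[
\frac{1}{m}\geq\frac{\log(1/a)}{\log(1/a_{0})+\log(1/a)}.
\]
The right half is Proposition~\ref{prop:continuum-bound}.

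For \eqref{eq:limit-sup}, set $G^{*}=\sup_{0<b<1}G_{\alpha}(b)$. By Proposition~\ref{prop:continuum-bound}, $G^{*}\leq\alpha/2<\infty$. Trivially $\limsup_{a\uparrow1}G_{\alpha}(a)\leq G^{*}$, so it remains to show
\[
\liminf_{a\uparrow1}G_{\alpha}(a)\geq G_{\alpha}(a_{0})
\]
for every $a_{0}\in(0,1)$. Divide \eqref{eq:quantitative-order} by $\log(1/a)$:
\[
G_{\alpha}(a)\geq\frac{\Lambda(a_{0},\alpha)}{\log(1/a_{0})+\log(1/a)}.
\]
As $a\uparrow1$ the denominator tends to $\log(1/a_{0})$, so the right side tends to $G_{\alpha}(a_{0})$. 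Taking the supremum over $a_{0}$ gives $\liminf_{a\uparrow1}G_{\alpha}(a)\geq G^{*}$, hence the limit exists and equals $G^{*}$.

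The limit is positive because $\Lambda(a_{0},\alpha)>0$ by Proposition~\ref{prop:hkw}(i), so $G^{*}\geq G_{\alpha}(a_{0})>0$. Together with $G^{*}\leq\alpha/2$ this also gives $\Lambda(a,\alpha)\asymp\log(1/a)$. There is no real obstacle in this argument. The only points to check are the direction of the monotonicity, which needs $a^{m}\leq a_{0}$ and is guaranteed by the ceiling, and that $G_{\alpha}$ need not be monotone; the argument avoids the latter by comparing each $a$ with an arbitrary fixed $a_{0}$.
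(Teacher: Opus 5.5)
Your proposal is correct and follows essentially the same route as the paper. Both take $m=\lceil\log(1/a_{0})/\log(1/a)\rceil$, chain the subsampling inequality with monotonicity via $a^{m}\leq a_{0}$, bound the ceiling by $y+1$, then divide by $\log(1/a)$ and let $a\uparrow1$ before taking the supremum over $a_{0}$.
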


\begin{proof}
Let
\[
A = \log{(1/a_{0})}, \quad d = \log{(1/a)}, \quad
M = \Bigl\lceil\frac{A}{d}\Bigr\rceil.
\]
For $a \in [a_{0},1)$, one has $d \leq A$, $a^{M} \leq a_{0}$, and $A \leq Md < A+d$. Theorem~\ref{thm:refinement} and Lemma~\ref{lem:monotonicity} therefore give
\[
\Lambda(a,\alpha) \geq
\frac{1}{M}\Lambda(a^{M},\alpha) \geq
\frac{\Lambda(a_{0},\alpha)}{M}
\rev{\,=\, \frac{\Lambda(a_{0},\alpha)\,d}{Md} \,>\, \frac{\Lambda(a_{0},\alpha)\,d}{A+d},}
\]
\rev{using $Md < A+d$ in the last step.} This proves \eqref{eq:ceiling-lower} \rev{and the lower bound in \eqref{eq:quantitative-order}}. The upper bound is Proposition~\ref{prop:continuum-bound}. Dividing the last display by $d$ gives
\[
G_{\alpha}(a) \geq \frac{A}{Md}G_{\alpha}(a_{0}) \geq \frac{A}{A+d}G_{\alpha}(a_{0}).
\]
Letting $a \uparrow 1$, and hence $d \downarrow 0$, shows that the liminf of $G_{\alpha}(a)$ is at least $G_{\alpha}(a_{0})$. Taking the supremum over $a_{0}\in(0,1)$ gives the lower bound in \eqref{eq:limit-sup}. The reverse bound follows from the definition of the supremum.
\end{proof}

\begin{proof}[Proof of Theorem~\ref{thm:main}]
The strict positivity in \eqref{eq:main-bounds} is part of Proposition~\ref{prop:hkw}(i). The upper bounds follow from Propositions~\ref{prop:continuum-bound} and \ref{prop:association}. Corollary~\ref{cor:order} gives \eqref{eq:ceiling-lower}, \eqref{eq:quantitative-order}, \eqref{eq:near-unit-order}, and \eqref{eq:limit-sup-intro}. Finally, Proposition~\ref{prop:continuum-bound} implies $G_{\alpha}^{*} \leq \alpha/2$, while positivity at any fixed $a_{0}$ gives $G_{\alpha}^{*} > 0$.
\end{proof}


\section{The stationary Lamperti representation}
\label{sec:lamperti}

\subsection{Continuous persistence in logarithmic time}

Define the Lamperti transform of stable L\'{e}vy motion
\cite{Lamperti1962} by
\begin{equation}
\label{eq:R-def}
R_{t} = e^{-t/\alpha}L_{e^{t}}, \quad t \in \RR.
\end{equation}
We write $\SaS(1)$ for the symmetric $\alpha$-stable law with characteristic function $e^{-|q|^{\alpha}}$.

\begin{lemma}[Stationarity and transitions]
\label{lem:R-transition}
The process $R$ is stationary and Markov, with marginal law $\SaS(1)$. For $t \in \RR$ and $h > 0$,
\begin{equation}
\label{eq:R-transition}
R_{t+h} = e^{-h/\alpha}R_{t} + (1-e^{-h})^{1/\alpha}Z_{t,h},
\end{equation}
where $Z_{t,h} \sim \SaS(1)$ is independent of $\sigma(R_{s}\colon s \leq t)$.
\end{lemma}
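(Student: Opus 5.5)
The plan is to read the transition law directly off the independent-increment structure of $L$, and then get stationarity and the Markov property from it.

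\emph{Decomposition.} Fix $t\in\RR$ and $h>0$, and write $L_{e^{t+h}} = L_{e^{t}} + (L_{e^{t+h}}-L_{e^{t}})$. Multiplying by $e^{-(t+h)/\alpha}$ gives
\[
R_{t+h} = e^{-h/\alpha}R_{t} + e^{-(t+h)/\alpha}\bigl(L_{e^{t+h}}-L_{e^{t}}\bigr).
\]
The increment $L_{e^{t+h}}-L_{e^{t}}$ is symmetric $\alpha$-stable with characteristic exponent $(e^{t+h}-e^{t})|q|^{\alpha}$, by \eqref{eq:levy-cf} and stationarity of increments. Hence $e^{-(t+h)/\alpha}(L_{e^{t+h}}-L_{e^{t}})$ has exponent
\[
e^{-(t+h)}(e^{t+h}-e^{t})|q|^{\alpha} = (1-e^{-h})|q|^{\alpha}.
\]
This is exactly the law of $(1-e^{-h})^{1/\alpha}Z_{t,h}$ with $Z_{t,h}\sim\SaS(1)$. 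So I define $Z_{t,h}$ as that normalized increment.

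\emph{Independence and Markov property.} For $s\leq t$, each $R_{s}$ is a measurable function of $L_{e^{s}}$ with $e^{s}\leq e^{t}$. Therefore $\sigma(R_{s}\colon s\leq t)\subseteq\sigma(L_{r}\colon r\leq e^{t})$. Independence of Lévy increments from the past then gives that $Z_{t,h}$ is independent of $\sigma(R_{s}\colon s\leq t)$. This proves \eqref{eq:R-transition}. The representation also shows that, conditional on the past up to $t$, the law of $R_{t+h}$ depends only on $R_{t}$. This gives the Markov property with a time-homogeneous kernel.

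\emph{Stationarity.} By self-similarity, $R_{t}=e^{-t/\alpha}L_{e^{t}}$ has exponent $e^{-t}e^{t}|q|^{\alpha}=|q|^{\alpha}$, so every marginal is $\SaS(1)$. A time-homogeneous Markov process whose one-dimensional marginals are all equal to the same law has shift-invariant finite-dimensional distributions. These are determined by the initial marginal and the homogeneous kernel. Alternatively, one can check directly that $(L_{e^{t+s}})_{t} \dis (e^{s/\alpha}L_{e^{t}})_{t}$, as processes indexed by $t\in\RR$ (for $L$ on $[0,\infty)$), by applying self-similarity with $c=e^{s}$.

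I expect no real obstacle here. The only point needing care is the measurability and independence of $Z_{t,h}$ from the whole past σ-field, and the filtration inclusion above handles it.
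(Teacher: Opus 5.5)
Your proposal is correct and matches the paper's proof: it uses the same decomposition $R_{t+h}=e^{-h/\alpha}R_{t}+e^{-(t+h)/\alpha}(L_{e^{t+h}}-L_{e^{t}})$ with the same scale computation, and your alternative stationarity check via self-similarity with $c=e^{s}$ is exactly the paper's argument. Your primary stationarity route (a time-homogeneous kernel together with identical $\SaS(1)$ marginals) is an equally valid variant, and your inclusion $\sigma(R_{s}\colon s\leq t)\subseteq\sigma(L_{r}\colon r\leq e^{t})$ spells out the independence from the past that the paper states only briefly.
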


\begin{proof}
Self-similarity gives $R_{t} \dis L_{1}$. More generally, for every $s\in\RR$ and every finite collection of times \rev{$t_{1},\ldots,t_{k}$, the self-similarity relation with $c = e^{s}$ gives 
\[
(L_{e^{s}e^{t_{i}}})_{1 \leq i \leq k} \,\dis\, (e^{s/\alpha}L_{e^{t_{i}}})_{1 \leq i \leq k},
\]
and multiplying the $i$-th coordinate on each side by $e^{-(t_{i}+s)/\alpha}$ yields}
\[
\bigl(R_{t_{1}+s}, \ldots, R_{t_{k}+s}\bigr) \dis 
\bigl(R_{t_{1}}, \ldots, R_{t_{k}}\bigr),
\]
so $R$ is stationary. Next,
\[
R_{t+h} = e^{-h/\alpha}R_{t} + e^{-(t+h)/\alpha}\bigl(L_{e^{t+h}}-L_{e^{t}}\bigr).
\]
The increment in parentheses is independent of the past and has stable scale
\[
(e^{t+h}-e^{t})^{1/\alpha} = e^{t/\alpha}(e^{h}-1)^{1/\alpha}.
\]
After multiplication by $e^{-(t+h)/\alpha}$, its scale becomes $(1-e^{-h})^{1/\alpha}$. This proves \eqref{eq:R-transition} and the Markov property.
\end{proof}

\begin{lemma}[Continuous log-time survival]
\label{lem:R-continuous}
There is a constant $\kappa_{\alpha}\in(0,\infty)$ such that
\begin{equation}
\label{eq:R-continuous}
\PP(R_{t} > 0,\ 0 \leq t \leq T) \sim \kappa_{\alpha} e^{-T/2},
\quad T \to \infty.
\end{equation}
In particular, the continuous persistence rate of $R$ is $1/2$.
\end{lemma}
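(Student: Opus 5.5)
The plan is to reduce the statement to the half-line exit asymptotic of Lemma~\ref{lem:stable-exit}, integrated against the stationary starting law. By definition \eqref{eq:R-def}, $R_{t} > 0$ for $0 \leq t \leq T$ is equivalent to $L_{s} > 0$ for $1 \leq s \leq e^{T}$, since the factor $e^{-t/\alpha}$ is positive. Condition on $\mathcal{F}_{1}$ and use the Markov property of $L$ at time $1$. With $y = L_{1}$, the event becomes $\{y > 0\}$ intersected with survival of $y + L'$ on $[0, e^{T}-1]$, where $L'$ is an independent copy of $L$. Hence
\[
\PP(R_{t} > 0,\ 0 \leq t \leq T) = \EE\bigl[\1_{\{L_{1} > 0\}}\,F_{\alpha}\bigl(L_{1}(e^{T}-1)^{-1/\alpha}\bigr)\bigr],
\]
where $F_{\alpha}$ is the function from \eqref{eq:exit-scaling}, and the endpoint conventions do not matter because the supremum has a continuous law.

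Next, multiply by $e^{T/2}$ and pass to the limit inside the expectation. Pointwise in $y > 0$, \eqref{eq:F-small} gives
\[
e^{T/2}F_{\alpha}\bigl(y(e^{T}-1)^{-1/\alpha}\bigr) \to c_{\alpha}\, y^{\alpha/2}.
\]
For domination, the global bound \eqref{eq:exit-uniform}, written as $F_{\alpha}(z) \leq C_{\alpha}\min\{1, z^{\alpha/2}\}$, yields
\[
e^{T/2}F_{\alpha}\bigl(y(e^{T}-1)^{-1/\alpha}\bigr) \leq C_{\alpha}\, y^{\alpha/2}\Bigl(\frac{e^{T}}{e^{T}-1}\Bigr)^{1/2} \leq C'_{\alpha}\, y^{\alpha/2}
\]
for $T \geq 1$. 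Dominated convergence then gives the asymptotic \eqref{eq:R-continuous} with
\[
\kappa_{\alpha} = c_{\alpha}\,\EE\bigl[(L_{1}^{+})^{\alpha/2}\bigr].
\]

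The main point to check is integrability of the dominating function, which is the step I expect to require care. The function $y^{\alpha/2}$ is integrable under $\SaS(1)$ because $\alpha/2 < \alpha$, and stable laws have finite moments of every order below $\alpha$. When $\alpha = 2$ the law is Gaussian and all moments are finite. Positivity of $\kappa_{\alpha}$ holds because $L_{1}$ is symmetric with a continuous law, so $\PP(L_{1} > 0) = 1/2$, and because $c_{\alpha} > 0$. Finiteness of $\kappa_{\alpha}$ is the moment bound just described. The rate statement follows by taking $-T^{-1}\log$.
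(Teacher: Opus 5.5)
Your proposal is correct and follows the paper's proof essentially step for step. Both arguments reduce to $\{L_{s}>0,\ 1\leq s\leq e^{T}\}$, apply the Markov property at time $1$, take pointwise limits from Lemma~\ref{lem:stable-exit} with domination from \eqref{eq:exit-uniform}, and arrive at the same constant $\kappa_{\alpha}=c_{\alpha}\EE[(L_{1}^{+})^{\alpha/2}]$; your explicit handling of the factor $(e^{T}/(e^{T}-1))^{1/2}$ and of the positivity of $\kappa_{\alpha}$ only spells out details the paper leaves implicit.
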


\begin{proof}
The event in \eqref{eq:R-continuous} is
\[
\{L_{s} > 0,\ 1 \leq s \leq e^{T}\}.
\]
By the Markov property at time $1$,
\begin{equation}
\label{eq:random-start}
\PP(R_{t} > 0,\ 0 \leq t \leq T) =
\EE\bigl[\1_{\{L_{1} > 0\}} \PP_{L_{1}}(\tau_{0}^{L} > e^{T}-1)\bigr].
\end{equation}
Let $v = e^{T}-1$. Lemma~\ref{lem:stable-exit} gives, for each $y > 0$,
\[
v^{1/2}\,\PP_{y}(\tau_{0}^{L} > v) \longrightarrow c_{\alpha} y^{\alpha/2},
\]
and the global bound \eqref{eq:exit-uniform} gives the domination
\[
v^{1/2}\,\PP_{y}(\tau_{0}^{L} > v) \leq C_{\alpha} y^{\alpha/2}.
\]
The moment $\EE[(L_{1}^{+})^{\alpha/2}]$ is finite; its order $\alpha/2$ lies below the stable index when $0 < \alpha < 2$, while $L_{1}$ is Gaussian when $\alpha = 2$. Dominated convergence in \eqref{eq:random-start} yields
\[
\PP(R_{t} > 0,\ 0 \leq t \leq T) \sim
c_{\alpha}\EE[(L_{1}^{+})^{\alpha/2}](e^{T}-1)^{-1/2}.
\]
This proves the claim with $\kappa_{\alpha} = c_{\alpha}\EE[(L_{1}^{+})^{\alpha/2}]$.
\end{proof}

\subsection{Discrete sampling}

For $h > 0$, put
\begin{equation}
\label{eq:pn-def}
p_{n}(h) = \PP(R_{jh} > 0,\ 1 \leq j \leq n).
\end{equation}

\begin{lemma}[Existence and elementary bounds]
\label{lem:lambda-exists}
The limit
\begin{equation}
\label{eq:lambda-h}
\lambda_{\alpha}(h) \,\deq\, -\lim_{n \to \infty}\,\frac{1}{n} \log{p_{n}(h)}
\end{equation}
exists and satisfies
\begin{equation}
\label{eq:lambda-basic}
0 \leq \lambda_{\alpha}(h) \leq h/2.
\end{equation}
\end{lemma}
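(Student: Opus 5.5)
The plan is to reduce the sampled Lamperti process to the AR($1$) chain with a \emph{random} stationary start, and then import the existence and bounds already proved for $\Lambda$. By Lemma~\ref{lem:R-transition}, the sequence $W_{j} \deq R_{jh}$, $j \geq 0$, is a stationary Markov chain. It satisfies
\[
W_{j} = e^{-h/\alpha}W_{j-1}+(1-e^{-h})^{1/\alpha}Z_{j},
\]
where the $Z_{j}$ are i.i.d.\ $\SaS(1)$ variables independent of $W_{0}\sim\SaS(1)$. This is exactly the recursion \eqref{eq:ar1} with coefficient $a = e^{-h/\alpha}$, so that $h = \alpha\log{(1/a)}$, and innovation scale $\sigma = (1-e^{-h})^{1/\alpha} = (1-a^{\alpha})^{1/\alpha}$. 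By Corollary~\ref{cor:stationary-scale}, the stationary scale is then $\sigma(1-a^{\alpha})^{-1/\alpha} = 1$. Hence $W$ is the AR($1$) chain started from its stationary law $\pi = \SaS(1)$.

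With this identification,
\[
p_{n}(h) = \PP_{\pi}(X_{1} > 0,\ldots,X_{n} > 0).
\]
Proposition~\ref{prop:hkw}(ii) concerns $\PP_{\pi}(X_{0} > 0,\ldots,X_{n} > 0)$, which differs only by the constraint at time $0$. I will sandwich the two quantities. For the lower comparison,
\[
p_{n}(h) \geq \PP_{\pi}(X_{0} > 0,X_{1} > 0,\ldots,X_{n} > 0).
\]
For the upper comparison, apply the Markov property at time $1$. This gives $p_{n}(h) = \EE_{\pi}[\1_{\{X_{1} > 0\}}Q_{n-1}(X_{1})]$, and since $X_{1}\sim\pi$ this equals $\PP_{\pi}(X_{0} > 0,\ldots,X_{n-1} > 0)$. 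So $p_{n}(h)$ is squeezed between the stationary-start persistence probabilities at horizons $n$ and $n-1$. Applying $-n^{-1}\log$ and Proposition~\ref{prop:hkw}(ii), the limit \eqref{eq:lambda-h} exists and equals $\Lambda(a,\alpha)$ with $a = e^{-h/\alpha}$. This also yields the first identity in \eqref{eq:lamperti-intro}.

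The bounds \eqref{eq:lambda-basic} then follow quickly. The inequality $\lambda_{\alpha}(h) \geq 0$ is immediate since $p_{n}(h) \leq 1$; strict positivity also follows from Proposition~\ref{prop:hkw}(i). For the upper bound, I will use the direct continuum comparison rather than routing through $\Lambda$. Continuous positivity of $R$ on $[0,nh]$ implies positivity at the grid points, so Lemma~\ref{lem:R-continuous} gives
\[
p_{n}(h) \geq \PP(R_{t} > 0,\ 0 \leq t \leq nh) \sim \kappa_{\alpha}e^{-nh/2},
\]
and therefore $\lambda_{\alpha}(h) \leq h/2$. Equivalently, Proposition~\ref{prop:continuum-bound} gives $\Lambda(a,\alpha) \leq \frac{\alpha}{2}\log{(1/a)} = h/2$.

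The main point needing care is the invocation of Proposition~\ref{prop:hkw}(ii), specifically that its stationary law is the one matched here. This holds because the scale invariance of $Q_{n}$ in $\sigma$ lets us use the normalized $\sigma$ above without changing the rate, and the stationary law is unique by Corollary~\ref{cor:stationary-scale}. If one prefers to avoid (ii), an alternative is to condition on $W_{0} = y$, which makes $p_{n}(h) = \EE[Q_{n}(W_{0})\1_{\{W_{0}>0\}}]$ up to the time-$0$ constraint. The lower bound then comes from restricting to $W_{0} \in [1,2]$ and using the monotonicity of $Q_{n}$ in $x$. The upper bound requires a uniform-in-$x$ exponential control, and that step is precisely what (ii) supplies.
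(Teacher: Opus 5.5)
Your proof is correct, but it reaches existence by a different route from the paper. The paper proves existence intrinsically. It writes $(R_{h},\ldots,R_{Nh})$ as a coordinatewise nondecreasing linear function of the independent increments $L_{e^{kh}}-L_{e^{(k-1)h}}$. Association then gives the supermultiplicativity \eqref{eq:pn-supermultiplicativity}, with stationarity of $R$ identifying the second block's probability with $p_{n}(h)$, and Fekete's lemma yields the limit. The upper bound $h/2$ is the same continuum comparison via Lemma~\ref{lem:R-continuous} that you use. You instead identify $(R_{jh})_{j\geq 0}$ with the stationary AR($1$) chain and apply the stationarity shift $p_{n}(h)=\PP_{\pi}(X_{0}>0,\ldots,X_{n-1}>0)$. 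Since this shift is an exact equality, your two-sided sandwich is more than needed. You then import existence from Proposition~\ref{prop:hkw}(ii); this is precisely the argument the paper uses afterwards, in Theorem~\ref{thm:lamperti}, to prove $\lambda_{\alpha}(h)=\Lambda(a,\alpha)$. Your route is shorter and delivers existence, the identification with $\Lambda(a,\alpha)$, and strict positivity in one stroke. The cost is that existence of $\lambda_{\alpha}(h)$ now rests on the Hinrichs--Kolb--Wachtel stationary-start theorem, which carries the uniform-in-start control you rightly flag as the hard part. The paper's argument is self-contained, using only association and the stationarity of $R$. It also gives $\lambda_{\alpha}(h)=\inf_{n}\bigl(-n^{-1}\log p_{n}(h)\bigr)$, hence the non-asymptotic bound $p_{n}(h)\leq e^{-n\lambda_{\alpha}(h)}$ for every $n$, which your limit argument does not provide. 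Finally, it keeps Theorem~\ref{thm:lamperti} as a genuine identification of two independently defined rates.
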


\begin{proof}
For fixed $N$, set
\[
\Delta_{1} = L_{e^{h}}, \quad
\Delta_{k} = L_{e^{kh}}-L_{e^{(k-1)h}}, \quad 2 \leq k \leq N.
\]
These variables are independent, and
\[
R_{jh} = e^{-jh/\alpha}\sum_{k=1}^{j}\Delta_{k},
\quad 1 \leq j \leq N.
\]
Thus $(R_{h},\ldots,R_{Nh})$ is a coordinatewise nondecreasing linear function of independent variables and is therefore associated. Applying association to the positivity events in two consecutive blocks and using stationarity gives
\begin{equation}
\label{eq:pn-supermultiplicativity}
p_{m+n}(h) \geq p_{m}(h)p_{n}(h), \quad m,n \geq 1.
\end{equation}
Thus $-\log{p_{n}(h)}$ is subadditive, and Fekete's lemma proves existence of the limit.

Continuous survival on $[0,Nh]$ implies survival at the sampled times. Lemma~\ref{lem:R-continuous} therefore gives $\lambda_{\alpha}(h) \leq h/2$.
\end{proof}

\begin{theorem}[Lamperti reduction]
\label{thm:lamperti}
For every $h > 0$, the sequence $(R_{jh})_{j \geq 0}$ is the stationary symmetric stable AR($1$) chain
\begin{equation}
\label{eq:R-ar}
R_{(j+1)h} = e^{-h/\alpha}R_{jh}+\zeta_{j},
\end{equation}
where the innovations are independent symmetric $\alpha$-stable variables with scale $(1-e^{-h})^{1/\alpha}$. Setting $h = \alpha\log{(1/a)}$ yields
\begin{equation}
\label{eq:Lambda-lambda}
\Lambda(a,\alpha) = \lambda_{\alpha}(h)
\end{equation}
and
\begin{equation}
\label{eq:G-lambda}
0 < \frac{G_{\alpha}^{*}}{\alpha} = \lim_{h \downarrow 0}\frac{\lambda_{\alpha}(h)}{h} = \sup_{h > 0}\frac{\lambda_{\alpha}(h)}{h} \leq \frac{1}{2}.
\end{equation}
\end{theorem}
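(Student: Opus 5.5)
The proof has three steps: identify the sampled chain, match it to the stationary-start rate of the original chain, and then transfer the results of Section 4.

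First, the AR(1) structure \eqref{eq:R-ar} is the transition identity \eqref{eq:R-transition} of Lemma~\ref{lem:R-transition} applied at times $t = jh$. The innovations $\zeta_{j} = (1-e^{-h})^{1/\alpha}Z_{jh,h}$ are built from increments of $L$ over the disjoint intervals $[e^{jh},e^{(j+1)h}]$. They are therefore independent of one another and of $R_{0}$, with the stated scale. With $a = e^{-h/\alpha}$, i.e.\ $h = \alpha\log(1/a)$, the chain $(R_{jh})$ is the AR($1$) recursion \eqref{eq:ar1} with coefficient $a$ and innovation scale $\sigma = (1-a^{\alpha})^{1/\alpha}$. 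By Corollary~\ref{cor:stationary-scale}, its stationary scale is $\sigma(1-a^{\alpha})^{-1/\alpha} = 1$. This matches $R_{0}\sim\SaS(1)$, so $(R_{jh})_{j\geq 0}$ is this chain started from its stationary law $\pi$.

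Second, I would identify the rate. Since $\PP(\cdot)$ here is $\PP_{\pi}$,
\[
p_{n}(h) = \PP_{\pi}(X_{1}>0,\ldots,X_{n}>0),
\]
whereas Proposition~\ref{prop:hkw}(ii) concerns the event that also includes $X_{0} > 0$. The two probabilities are sandwiched:
\[
\PP_{\pi}(X_{0}>0,\ldots,X_{n}>0) \leq p_{n}(h) \leq \PP_{\pi}(X_{0}>0,\ldots,X_{n-1}>0)\,/\,\PP_{\pi}(X_{0}>0).
\]
The upper bound follows from stationarity, which gives $p_{n}(h) = \PP_{\pi}(X_{0}>0,\ldots,X_{n-1}>0)$ directly. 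Both sides have exponential rate $\Lambda(a,\alpha)$ by \eqref{eq:stationary-start-rate}, and the rate is independent of $\sigma$. This proves \eqref{eq:Lambda-lambda}; it also shows that the limit in Lemma~\ref{lem:lambda-exists} coincides with $\Lambda$.

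Third, since $\log(1/a) = h/\alpha$, we have $G_{\alpha}(a) = \alpha\lambda_{\alpha}(h)/h$, and the map $a\mapsto h$ is a decreasing bijection from $(0,1)$ onto $(0,\infty)$ with $a\uparrow 1$ corresponding to $h\downarrow 0$. Then \eqref{eq:G-lambda} is a rewriting of \eqref{eq:limit-sup-intro} in Theorem~\ref{thm:main} (equivalently Corollary~\ref{cor:order}). Strict positivity and the bound $1/2$ come from $G_{\alpha}^{*}\in(0,\alpha/2]$, or alternatively from $\lambda_{\alpha}(h)\leq h/2$ in Lemma~\ref{lem:lambda-exists}.

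The only delicate point is the second step. One must check that Proposition~\ref{prop:hkw}(ii) applies, which needs the off-by-one sandwich and the scale-invariance remark, since $\sigma$ depends on $h$ but the rate does not. One must also confirm that the independence in Lemma~\ref{lem:R-transition} gives full joint independence of the $\zeta_{j}$ from $R_{0}$. Both follow from the construction via disjoint increments of $L$.
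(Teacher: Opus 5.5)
Your proposal is correct and follows the paper's proof. You identify $(R_{jh})$ as the stationary AR($1$) chain via Lemma~\ref{lem:R-transition} and disjoint increments of $L$, use stationarity to shift indices so that $p_{n}(h)$ becomes the stationary-start probability of Proposition~\ref{prop:hkw}(ii), and then transcribe Corollary~\ref{cor:order} through the bijection $h=\alpha\log(1/a)$. The only difference is that your sandwich is redundant: the identity $p_{n}(h)=\PP_{\pi}(X_{0}>0,\ldots,X_{n-1}>0)$, which you already note and which the paper writes as $p_{n+1}(h)=\PP_{\pi}(X_{0}>0,\ldots,X_{n}>0)$, gives the rate directly.
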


\begin{proof}
Equation \eqref{eq:R-ar} is \eqref{eq:R-transition} at $t = jh$. The transition noises for different $j$ come from disjoint increments of $L$ and are therefore independent. Also $R_{0} = L_{1} \sim \SaS(1)$, the stationary law of the recursion.

When $h = \alpha\log{(1/a)}$, the coefficient in \eqref{eq:R-ar} is $a$, so $(R_{jh})_{j \geq 0}$ is the stationary version of \eqref{eq:ar1} with innovation scale $(1-e^{-h})^{1/\alpha}$, a scale that does not affect exponential rates. By stationarity, every window of $n+1$ consecutive positivity constraints has the same probability. In particular,
\[
\PP_{\pi}(X_{0} > 0,X_{1} > 0,\ldots,X_{n} > 0) = \PP(R_{jh} > 0,\ 0 \leq j \leq n) = p_{n+1}(h).
\]
Proposition~\ref{prop:hkw}(ii) applied to this chain therefore yields
\[
\lambda_{\alpha}(h) = -\lim_{n \to \infty}\,\frac{1}{n}\log{p_{n+1}(h)} = \Lambda(a,\alpha).
\]
This proves \eqref{eq:Lambda-lambda} and, since $\Lambda(a,\alpha) > 0$, strengthens the lower bound in \eqref{eq:lambda-basic} to strict positivity. Since $a\mapsto h = \alpha\log{(1/a)}$ is a bijection from $(0,1)$ to $(0,\infty)$,
\[
G_{\alpha}(a) = \alpha\frac{\lambda_{\alpha}(h)}{h},
\]
and \eqref{eq:limit-sup} proves \eqref{eq:G-lambda}.
\end{proof}

\begin{remark}[Dense-sampling reduction]
By \eqref{eq:G-lambda}, $G_{\alpha}^{*}/\alpha$ is the dense-sampling limit of the persistence rate per unit log-time for $R$: continuous monitoring has rate $1/2$, while sampling at mesh $h$ has rate $\lambda_{\alpha}(h)/h$.
\end{remark}

\begin{corollary}[Dense sampling of stable Ornstein--Uhlenbeck processes]
\label{cor:ou-dense}
Let $Y$ solve \eqref{eq:ou-sde}, with $\theta,\eta,x > 0$. For $\Delta > 0$, define its sampled persistence rate per unit physical time by
\begin{equation}
\label{eq:Gamma-ou}
\Gamma_{\alpha,\theta}(\Delta) \,\deq\, -\lim_{n \to \infty}\,\frac{1}{n\Delta} \log{\PP_{x}(Y_{j\Delta} > 0,\ 1 \leq j \leq n)}.
\end{equation}
Then
\begin{equation}
\label{eq:Gamma-ou-limit}
\lim_{\Delta \downarrow 0}\Gamma_{\alpha,\theta}(\Delta) = \theta G_{\alpha}^{*} \in (0,\alpha\theta/2].
\end{equation}
Moreover, for every $m\in\NN$,
\begin{equation}
\label{eq:Gamma-refinement}
\Gamma_{\alpha,\theta}(\Delta/m) \geq \Gamma_{\alpha,\theta}(\Delta).
\end{equation}
The upper endpoint $\alpha\theta/2$ is the continuous-time persistence rate in Proposition~\ref{prop:continuous-ou}.
\end{corollary}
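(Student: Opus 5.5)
The plan is to identify the $\Delta$-skeleton of $Y$ with the AR($1$) chain \eqref{eq:ar1} through the exact skeleton relations \eqref{eq:ou-skeleton}--\eqref{eq:ou-innovation-scale}, and then convert everything into statements about $G_{\alpha}$.

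By \eqref{eq:ou-skeleton}, the sequence $(Y_{j\Delta})_{j \geq 0}$ started at $Y_{0}=x$ is an AR($1$) chain. Its coefficient is $a=e^{-\theta\Delta}$, its innovations are i.i.d.\ symmetric $\alpha$-stable, and their scale $\sigma$ is given by \eqref{eq:ou-innovation-scale}. Hence
\[
\PP_{x}(Y_{j\Delta} > 0,\ 1 \leq j \leq n) = Q_{n}(x;e^{-\theta\Delta},\alpha,\sigma).
\]
Proposition~\ref{prop:hkw}(i), together with the scale invariance $Q_{n}(x;a,\alpha,\sigma)=Q_{n}(x/\sigma;a,\alpha,1)$, shows that the limit in \eqref{eq:Gamma-ou} exists. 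It equals $\Lambda(e^{-\theta\Delta},\alpha)/\Delta$, independently of $x$ and $\eta$. Since $\log(1/a)=\theta\Delta$, this gives the identity
\[
\Gamma_{\alpha,\theta}(\Delta) = \theta\,\frac{\Lambda(a,\alpha)}{\log(1/a)} = \theta\,G_{\alpha}(e^{-\theta\Delta}).
\]

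The dense-sampling limit \eqref{eq:Gamma-ou-limit} then follows. As $\Delta \downarrow 0$ we have $a=e^{-\theta\Delta}\uparrow 1$, so \eqref{eq:limit-sup-intro} from Theorem~\ref{thm:main} (equivalently Corollary~\ref{cor:order}) gives
\[
\Gamma_{\alpha,\theta}(\Delta) \to \theta G_{\alpha}^{*} \in (0,\alpha\theta/2].
\]
The refinement inequality \eqref{eq:Gamma-refinement} comes from Theorem~\ref{thm:refinement} applied at the finer coefficient $a'=e^{-\theta\Delta/m}$. This gives $G_{\alpha}(a') \geq G_{\alpha}((a')^{m}) = G_{\alpha}(e^{-\theta\Delta})$, and multiplying by $\theta$ yields the claim. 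The final sentence of the statement is exactly Proposition~\ref{prop:continuous-ou}.

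There is no real obstacle here. The only point needing care is to check that the innovation scale in \eqref{eq:ou-innovation-scale} depends on $\Delta$ but does not affect the rate, which is where the $\sigma$-independence of $\Lambda$ is used. One must also note that the skeleton is started from the deterministic point $x>0$, so Proposition~\ref{prop:hkw}(i) applies directly and the stationary version (ii) is not needed.
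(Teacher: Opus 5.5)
Your proposal is correct and matches the paper's proof essentially step for step. It identifies the $\Delta$-skeleton as an AR($1$) chain with coefficient $e^{-\theta\Delta}$, then uses Proposition~\ref{prop:hkw} and $\sigma$-independence to obtain $\Gamma_{\alpha,\theta}(\Delta)=\theta G_{\alpha}(e^{-\theta\Delta})$, and finally invokes \eqref{eq:limit-sup} for the limit and \eqref{eq:refinement} at $a=e^{-\theta\Delta/m}$ for the refinement inequality.
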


\begin{proof}
The $\Delta$-skeleton of $Y$ is an AR($1$) sequence with coefficient $e^{-\theta\Delta}$. Proposition~\ref{prop:hkw} and scale independence therefore give
\begin{equation}
\label{eq:Gamma-G-identity}
\Gamma_{\alpha,\theta}(\Delta) = \frac{\Lambda(e^{-\theta\Delta},\alpha)}{\Delta} = \theta G_{\alpha}(e^{-\theta\Delta}).
\end{equation}
Equation \eqref{eq:Gamma-ou-limit} follows from \eqref{eq:limit-sup}. Applying \eqref{eq:refinement} with $a = e^{-\theta\Delta/m}$ gives \eqref{eq:Gamma-refinement}.
\end{proof}

\begin{corollary}[Gaussian dense-sampling limit]
\label{cor:gaussian}
For $\alpha = 2$,
\begin{equation}
\label{eq:gaussian-limit}
\lim_{h \downarrow 0}\frac{\lambda_{2}(h)}{h} = \frac{1}{2}, \quad
\lim_{a \uparrow 1}\frac{\Lambda(a,2)}{\log{(1/a)}} = 1.
\end{equation}
\end{corollary}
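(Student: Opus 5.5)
For $\alpha=2$ the process $R_{t}=e^{-t/2}L_{e^{t}}$ is a stationary Gaussian Ornstein--Uhlenbeck process, so I would import the sharp dense-sampling result rather than prove it from scratch. By Lemma~\ref{lem:R-transition}, $R$ is stationary, Markov and centered Gaussian with marginal $\SaS(1)=N(0,2)$. Its covariance is
\[
\EE[R_{0}R_{t}] = e^{-t/2}\EE[L_{1}L_{e^{t}}] = 2e^{-|t|/2}.
\]
After multiplying by the harmless constant $2^{-1/2}$, $R$ is the stationary OU process with correlation $e^{-|t|/2}$. Its continuous-time persistence exponent is $1/2$ (Lemma~\ref{lem:R-continuous}, or the classical value $\theta$ for correlation $e^{-\theta|t|}$).

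Step one is to invoke Feldheim, Feldheim and Mukherjee \cite[Theorem~5(II) and Remark~6]{FeldheimFM2025}, as already announced in the introduction. For a stationary Gaussian process with this covariance, the exponent of the $h$-sampled sequence, divided by $h$, converges as $h\downarrow0$ to the continuous-time exponent. The main obstacle is checking that their hypotheses hold for the OU kernel $e^{-|t|/2}$ and that their normalizations match ours. Concretely, I need their sampled exponent to equal our $\lambda_{2}(h)=-\lim n^{-1}\log p_{n}(h)$, and their continuous exponent to be measured per unit time with value $1/2$ for this kernel. If they state the result for correlation $e^{-|t|}$, I would use the time change $t\mapsto t/2$. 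That maps mesh $h$ to mesh $h/2$ and the rate $1/2$ to $1$, so the ratio limit is unchanged. The spectral density $\propto (1/4+\omega^{2})^{-1}$ is positive, integrable and nonvanishing at the origin, which should cover the usual spectral conditions. This gives $\lim_{h\downarrow0}\lambda_{2}(h)/h=1/2$.

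As a consistency check, the lower side of this limit already follows from the paper. Lemma~\ref{lem:lambda-exists} gives $\lambda_{2}(h)/h\le 1/2$, and Theorem~\ref{thm:lamperti} shows the limit exists and equals the supremum. So the imported result is only needed for the inequality $\liminf_{h\downarrow0}\lambda_{2}(h)/h\ge 1/2$.

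Step two is immediate. Theorem~\ref{thm:lamperti} gives $G_{2}^{*}=\alpha\lim_{h\downarrow0}\lambda_{2}(h)/h$ with $\alpha=2$, so $G_{2}^{*}=2\cdot\tfrac12=1$. Combined with \eqref{eq:limit-sup-intro}, this yields
\[
\lim_{a\uparrow1}\frac{\Lambda(a,2)}{\log(1/a)}=G_{2}(1^{-})=G_{2}^{*}=1,
\]
which is the second identity in \eqref{eq:gaussian-limit}.
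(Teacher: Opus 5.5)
Your proposal is correct and follows the paper's proof essentially step for step. You identify $R/\sqrt{2}$ as the stationary Gaussian Ornstein--Uhlenbeck process with correlation $e^{-|t|/2}$, import the dense-sampling limit from \cite[Theorem~5(II) and Remark~6]{FeldheimFM2025} (whose per-unit-time discretized exponent on $h\mathbb{Z}$ is $\lambda_{2}(h)/h$), and convert to $\Lambda(a,2)$ via $h=2\log(1/a)$. The one thing the paper does that you leave as ``should cover'' is the explicit hypothesis check. With spectral density $\propto(\omega^{2}+1/4)^{-1}$, the spectral measure lies in $\mathcal{L}\cap\mathcal{M}$: its density at the origin is finite and positive, and its $\log^{1+\beta}$ moment is finite. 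It also satisfies the polynomial-decay condition of Remark~6 with exponent $2$, which is the condition under which Remark~6 extends Theorem~5(II). One wording slip: in your consistency check, $\lambda_{2}(h)/h\le 1/2$ is the upper bound, not the ``lower side''.
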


\begin{proof}
For $\alpha = 2$, sign events are invariant under scaling, so we may replace $R$ by $R/\sqrt{2}$, a centered stationary Gaussian process with unit variance and covariance $e^{-|t|/2}$. \rev{Indeed, $\EE[L_{u}L_{v}] = 2\min\{u,v\}$ when $\alpha = 2$, so $\EE[R_{t}R_{t+s}] = e^{-(2t+s)/2} \cdot 2e^{t} = 2e^{-s/2}$ for $s \geq 0$.} The spectral density of $R/\sqrt{2}$ is proportional to $(\omega^{2}+1/4)^{-1}$. \rev{In the notation of \cite{FeldheimFM2025}, $\mathcal{M}$ is the class of spectral measures whose density at the origin exists and is positive (possibly infinite), and $\mathcal{L}$ is the class of spectral measures $\mu$ with logarithmic moment $\int \max\{\log^{1+\beta}|\omega|,1\}\dd\mu(\omega)$ finite for some $\beta > 0$. The density above is finite and positive at the origin, and its $O(|\omega|^{-2})$ tail makes this moment finite for every $\beta > 0$ (polynomial decay dominates any power of the logarithm: $\int_{1}^{\infty}\omega^{-2}\log^{1+\beta}{\omega}\dd\omega < \infty$); thus the spectral measure of $R/\sqrt{2}$ belongs to $\mathcal{L}\cap\mathcal{M}$.} The spectral density also satisfies
\[
\sup_{\omega \in \RR} \frac{\rev{\omega^{2}}}{\omega^{2}+1/4} < \infty,
\]
which is the polynomial-decay condition of \cite[Remark~6]{FeldheimFM2025}\rev{, with decay exponent $2$}. The discretized persistence exponent of \cite{FeldheimFM2025} is normalized per unit time and, on the sampling grid $h\mathbb{Z}$, equals $\lambda_{2}(h)/h$ in the present notation. The extension of Theorem~5(II) stated in \cite[Remark~6]{FeldheimFM2025} therefore gives the first limit in \eqref{eq:gaussian-limit}. The second follows from $h = 2\log{(1/a)}$ and \eqref{eq:Lambda-lambda}.
\end{proof}

\begin{remark}[Gaussian benchmark and jump case]
The Gaussian limit in Corollary~\ref{cor:gaussian} is an application of existing dense-sampling theory. For $0 < \alpha < 2$, the process $R$ is a jump-driven stable Ornstein--Uhlenbeck process with no Gaussian component, and the value of the corresponding dense-sampling limit is not supplied by the Gaussian theorem.
\end{remark}


\section{The remaining dense-sampling problem}
\label{sec:remaining}

\begin{conjecture}[Sharp near-unit constant]
\label{conj:sharp}
For every $0 < \alpha < 2$,
\begin{equation}
\label{eq:sharp-conjecture}
\lim_{a \uparrow 1}\frac{\Lambda(a,\alpha)}{\log{(1/a)}} = \frac{\alpha}{2}.
\end{equation}
Equivalently,
\begin{equation}
\label{eq:dense-conjecture}
\lim_{h \downarrow 0}\frac{\lambda_{\alpha}(h)}{h} = \frac{1}{2}.
\end{equation}
\end{conjecture}

Theorems~\ref{thm:main} and \ref{thm:lamperti} prove the existence of the limits in \eqref{eq:sharp-conjecture} and \eqref{eq:dense-conjecture}; the conjecture identifies their values, equivalently asserting $G_{\alpha}^{*} = \alpha/2$. By Corollary~\ref{cor:ou-dense}, it also asserts that dense sampling of the stable Ornstein--Uhlenbeck process recovers its continuous-time persistence rate.

The discrepancy between continuous and discrete persistence arises at the boundary: a sampled path may cross below zero and return above zero between successive observations. For a jump process, controlling such rescued sub-mesh crossings requires joint control of three quantities: the mass near zero under the conditioned law, the depth of the first overshoot below zero, and the probability of returning above zero during the remainder of the mesh interval. An estimate conditioned on a fixed overshoot depth is insufficient because the first overshoot can be arbitrarily small.

\begin{remark}[Remaining obstacle]
A proof of \eqref{eq:dense-conjecture} must show that these crossings do not change the first-order persistence rate as $h \downarrow 0$. The required estimate must be uniform over conditioned states whose distance from zero is of order $h^{1/\alpha}$ and must incorporate the stable overshoot law. No particular power of $h$ is asserted here.
\end{remark}

Further questions concern asymmetric strictly stable innovations, nonzero thresholds, subexponential corrections to $Q_{n}$, and the law of the chain conditioned on long survival. Analogous sign-persistence questions could be posed for local fields or spatial averages in the damped Gaussian-driven Klein--Gordon equation of \cite{Smii2026}\rev{, where damping and noise drive the field to a stationary Gaussian state whose coordinates can be monitored discretely, in direct analogy with the sampled Ornstein--Uhlenbeck process studied here}, and for variants with stable L\'{e}vy noise.


\section*{Acknowledgments}

J.R.G.M. acknowledges partial financial support from Funda\c{c}\~{a}o de Amparo \`{a} Pesquisa do Estado de S\~{a}o Paulo -- FAPESP, Brazil, through research grant no.~2020/04475-7. B.S. acknowledges the grant provided by the Interdisciplinary Research Center for Intelligent Secure Systems at King Fahd University of Petroleum and Minerals under project \#INSS2602.



\begin{thebibliography}{69}

\bibitem{AlsmeyerBRS2023}
G.~Alsmeyer, A.~Bostan, K.~Raschel, and T.~Simon,
\emph{Persistence for a class of order-one autoregressive processes and Mallows--Riordan polynomials},
Advances in Applied Mathematics \textbf{150} (2023), 102555.
\href{https://doi.org/10.1016/j.aam.2023.102555}{doi:10.1016/j.aam.2023.102555}.

\bibitem{AurzadaBaumgarten2011}
F.~Aurzada and C.~Baumgarten,
\emph{Survival probabilities of weighted random walks},
ALEA Latin American Journal of Probability and Mathematical Statistics
\textbf{8} (2011), 235--258.
\href{https://alea.impa.br/articles/v8/08-13.pdf}{alea.impa.br/articles/v8/08-13}.

\bibitem{AurzadaMukherjeeZeitouni2021}
F.~Aurzada, S.~Mukherjee, and O.~Zeitouni,
\emph{Persistence exponents in Markov chains},
Annales de l'Institut Henri Poincar\'{e} (B) Probabilit\'{e}s et Statistiques
\textbf{57} (2021),
1411--1441.
\href{https://doi.org/10.1214/20-AIHP1114}{doi:10.1214/20-AIHP1114}.

\bibitem{AurzadaSimon2015}
F.~Aurzada and T.~Simon,
\emph{Persistence probabilities and exponents},
in \emph{L\'{e}vy Matters V}, Lecture Notes in Mathematics \textbf{2149}, Springer, Cham, 2015, pp.~183--224.
\href{https://doi.org/10.1007/978-3-319-23138-9_3}{doi:10.1007/978-3-319-23138-9\_3}.

\bibitem{Bingham1973}
N.~H.~Bingham,
\emph{Maxima of sums of random variables and suprema of stable processes},
Zeitschrift f\"ur Wahrscheinlichkeitstheorie und Verwandte Gebiete
\textbf{26} (1973), 273--296.
\href{https://doi.org/10.1007/BF00534892}{doi:10.1007/BF00534892}.

\bibitem{BrayMajumdarSchehr2013}
A.~J.~Bray, S.~N.~Majumdar, and G.~Schehr,
\emph{Persistence and first-passage properties in nonequilibrium systems},
Advances in Physics \textbf{62} (2013), 225--361.
\href{https://doi.org/10.1080/00018732.2013.803819}{doi:10.1080/00018732.2013.803819}.

\bibitem{DenisovHKW2022}
D.~Denisov, G.~Hinrichs, M.~Kolb, and V.~Wachtel,
\emph{Persistence of autoregressive sequences with logarithmic tails},
Electronic Journal of Probability \textbf{27} (2022), paper no.~154.
\href{https://doi.org/10.1214/22-EJP879}{doi:10.1214/22-EJP879}.

\bibitem{DoneySavov2010}
R.~A.~Doney and M.~S.~Savov,
\emph{The asymptotic behavior of densities related to the supremum of a stable process},
The Annals of Probability \textbf{38} (2010), 316--326.
\href{https://doi.org/10.1214/09-AOP479}{doi:10.1214/09-AOP479}.

\bibitem{DonnartSimon2026}
T.~Donnart and T.~Simon,
\emph{Persistence probabilities of autoregressive chains with continuous innovations},
preprint (2026), arXiv:2604.05670 [math.PR].
\href{https://doi.org/10.48550/arXiv.2604.05670}{doi:10.48550/arXiv.2604.05670}.

\bibitem{EsaryProschanWalkup1967}
J.~D.~Esary, F.~Proschan, and D.~W.~Walkup,
\emph{Association of random variables, with applications},
The Annals of Mathematical Statistics \textbf{38} (1967), 1466--1474.
\href{https://doi.org/10.1214/aoms/1177698701}{doi:10.1214/aoms/1177698701}.

\bibitem{FeldheimFM2025}
N.~D.~Feldheim, O.~N.~Feldheim, and S.~Mukherjee,
\emph{Persistence and ball exponents for Gaussian stationary processes},
Communications on Pure and Applied Mathematics \textbf{78} (2025), 1949--2000.
\href{https://doi.org/10.1002/cpa.22255}{doi:10.1002/cpa.22255}.

\bibitem{HKW2020}
G.~Hinrichs, M.~Kolb, and V.~Wachtel,
\emph{Persistence of one-dimensional AR($1$)-sequences},
Journal of Theoretical Probability \textbf{33} (2020), 65--102.
\href{https://doi.org/10.1007/s10959-018-0850-0}{doi:10.1007/s10959-018-0850-0}.

\bibitem{Lamperti1962}
J.~Lamperti,
\emph{Semi-stable stochastic processes},
Transactions of the American Mathematical Society \textbf{104} (1962), 62--78.
\href{https://doi.org/10.1090/S0002-9947-1962-0138128-7}{doi:10.1090/S0002-9947-1962-0138128-7}.

\bibitem{MajumdarBrayEhrhardt2001}
S.~N.~Majumdar, A.~J.~Bray, and G.~C.~M.~A.~Ehrhardt,
\emph{Persistence of a continuous stochastic process with discrete-time sampling},
Physical Review E \textbf{64} (2001), 015101(R).
\href{https://doi.org/10.1103/PhysRevE.64.015101}{doi:10.1103/PhysRevE.64.015101}.

\bibitem{Masuda2004}
H.~Masuda,
\emph{On multidimensional Ornstein--Uhlenbeck processes driven by a general L\'{e}vy process},
Bernoulli \textbf{10} (2004), 97--120.
\href{https://doi.org/10.3150/bj/1077544605}{doi:10.3150/bj/1077544605}.

\bibitem{Patie2008}
P.~Patie,
\emph{$q$-invariant functions for some generalizations of the Ornstein--Uhlenbeck semigroup},
ALEA Latin American Journal of Probability and Mathematical Statistics
\textbf{4} (2008), 31--43.
\href{https://alea.impa.br/articles/v4/04-02.pdf}{alea.impa.br/articles/v4/04-02}.

\bibitem{SamorodnitskyTaqqu1994}
G.~Samorodnitsky and M.~S.~Taqqu,
\emph{Stable Non-Gaussian Random Processes},
Chapman \& Hall, New York, 1994.

\bibitem{Smii2026}
B.~Smii,
\emph{A novel stochastic approach of thermalization and symmetry breaking},
preprint (2026), arXiv:2602.10563 [math-ph].
\href{https://doi.org/10.48550/arXiv.2602.10563}{doi:10.48550/arXiv.2602.10563}.

\bibitem{SparreAndersen1953}
E.~Sparre Andersen,
\emph{On the fluctuations of sums of random variables},
Mathematica Scandinavica \textbf{1} (1953), 263--285.
\href{https://doi.org/10.7146/math.scand.a-10385}{doi:10.7146/math.scand.a-10385}.

\bibitem{VysotskyWachtel2023}
V.~Vysotsky and V.~Wachtel,
\emph{Persistence of AR($1$) sequences with Rademacher innovations and linear mod~1 transforms},
Ergodic Theory and Dynamical Systems \textbf{46} (2026), 2361--2412.
\href{https://doi.org/10.1017/etds.2026.10292}{doi:10.1017/etds.2026.10292}.

\end{thebibliography}
\end{document}